\documentclass{amsart}

\usepackage{amsmath,amssymb}
\usepackage{hyperref,url,breakurl}
\usepackage{verbatimbox}
\usepackage{tikz}
\usepackage{algorithm}
\usepackage{algorithmicx}
\usepackage[noend]{algpseudocode}
\usepackage{booktabs}

\DeclareMathOperator{\Aut}{Aut}
\DeclareMathOperator{\lcm}{lcm}

\numberwithin{equation}{section}

\theoremstyle{plain}
\newtheorem{theorem}{Theorem}[section]
\newtheorem{proposition}[theorem]{Proposition}
\newtheorem{problem}[theorem]{Problem}

\theoremstyle{definition}
\newtheorem{example}[theorem]{Example}

\begin{document}

\title[Counting rank-3 lattices that have few coatoms]{Counting graded lattices of rank three\\that have few coatoms}

\author{Jukka Kohonen}

\address{Department of Computer Science, University of Helsinki,
PO Box 68, FI-00014 Helsinki, Finland}
\email{jukka\_kohonen@iki\_fi}

\maketitle

\begin{abstract}
  We consider the problem of computing $R(c,a)$, the number of
  unlabeled graded lattices of rank~$3$ that contain $c$~coatoms and
  $a$~atoms.  More specifically we do this when $c$~is fairly small,
  but $a$ may be large.  For this task, we describe a computational
  method that combines constructive listing of basic cases and tools
  from enumerative combinatorics.  With this method we compute the
  exact values of $R(c,a)$ for $c\le 9$ and $a\le 1000$.
  
  We also show that, for any fixed~$c$, there exists a quasipolynomial
  in~$a$ that matches with $R(c,a)$ for all $a$ above a small value.
  We explicitly determine these quasipolynomials for $c \le 7$, thus
  finding closed form expressions of $R(c,a)$ for~$c \le 7$.

  \smallskip
  \noindent \textbf{Keywords:}
  Graded lattices; isomorphism; cycle index theorem; computer algebra.

  \smallskip
  \noindent \textbf{Mathematics Subject Classification 2010:}
  06B99, 05C30, 20B40

\end{abstract}


\usetikzlibrary{matrix}
\usetikzlibrary{positioning}
\usetikzlibrary{calc}

\floatstyle{ruled}
\newfloat{program}{tbhp}{lop}
\floatname{program}{Program}

\newcommand{\Naturals}{\mathbb{N}}
\newcommand{\Integers}{\mathbb{Z}}
\newcommand{\Rationals}{\mathbb{Q}}
\newcommand{\Complex}{\mathbb{C}}
\newcommand{\ItGamma}{\mathit{\Gamma}}

\section{Introduction}
\label{sec:intro}

Let $R(c,a)$ denote the number of unlabeled graded lattices of rank~3
that contain $c$~coatoms and $a$~atoms, with $c,a \ge 1$.  One way of
determining the number is to actually generate the lattices.  But that
takes time at least linear in $R(c,a)$, and is thus limited to small
instances.

In this work we seek to compute $R(c,a)$ when $c$ is small, but $a$
may be large.  The converse case is handled by duality since
$R(c,a)=R(a,c)$.  We will not be content with approximate or
asymptotic results here: we want the exact numeric values and, if
possible, exact closed-form expressions of $R(c,a)$ for all $a \ge 1$.

This paper is structured as follows.  In Section~\ref{sec:related} we
briefly review related work and motivate the present study.
In~Section~\ref{sec:connection} we define a transformation of rank-3
lattices that will be helpful in counting.  In~Section~\ref{sec:three}
we manually derive $R(c,a)$ in closed form when $c \le 3$, and $a$~is
arbitrarily large.

For larger values of~$c$ we turn to computations.  In
Sections~\ref{sec:balls}---\ref{sec:implementation} we present a
computational method that uses nauty~\cite{nauty} to list the ways of
connecting $c$~coatoms, and then employs Redfield--P\'olya counting,
or the cycle index theorem from the theory of enumerative
combinatorics, to count distributions of atoms among the coatoms.
With this method, implemented using the computer algebra system
GAP~\cite{gap}, we have computed $R(c,a)$ for $c \le 9$ and $a \le
1000$.  In Section~\ref{sec:functional}, by doing exact polynomial
fits on residue classes, we derive $R(c,a)$ in closed form for $4 \le
c \le 7$ also.

\section{Related work and motivation}
\label{sec:related}

In the context of enumeration, a lattice is usually treated as a kind
of a directed graph, defined by its covering relation (or ``Hasse
graph'').  This places our counting problem in the realm of graph
enumeration (cf.~\cite{harary1973}).

Previous works that count lattices fall roughly into two categories.
For small lattices, exact counts are often determined by constructive
methods, by actually generating all lattices up to a certain size
\cite{gebhardt2016,heitzig2002,jipsen2015,kyuno1979}, or lattices in
some specific family~\cite{erne2002,jipsen2015}.  For large lattices
there are asymptotic upper and lower
bounds~\cite{erne2002,jipsen2015,kleitman1980,klotz1971}.  The present
work is somewhere in between: we seek exact counts of lattices that
are too large for constructive methods.

Enumeration of graded lattices seems to be a relatively uncharted
middle ground between the full generality of all lattices, and many
specific subfamilies.  Indeed, graded lattices encompass several
interesting subfamilies such as the distributive, modular, and
semimodular lattices.  There are several enumerative results both on
all lattices and on these subfamilies, but few results on graded
lattices.  Currently, the counts of graded lattices in OEIS
\cite{oeis} (sequence A278691) are based on simply generating them one
by one.

Efficient methods of counting graded lattices might be applicable to
some of their subfamilies as well.  In the other direction, graded
lattices may help the understanding of lattices in general.  An
example of such use is seen in the work of Kleitman and
Winston~\cite{kleitman1980}.  They first prove that the number of
rank-3 lattices of $n+2$ elements is at most $\beta^{n^{3/2} +
  o(n^{3/2})}$, where $\beta \approx 1.699408$.  From this they prove
an upper bound for all lattices, by using rank-3 lattices (``two level
lattices'') as building blocks.

It seems natural to begin the study of graded lattices from instances
that have few levels (low rank).  Graded lattices of rank~2 are
trivial: for any $n \ge 3$, there is only one rank-2 lattice of $n$
elements.  But rank-3 lattices are already quite nontrivial.  As shown
by Klotz and Lucht, their number grows faster than exponentially in
the number of elements~\cite{klotz1971}.

Of course, any algorithm that generates all lattices can be adapted to
rank-3 lattices.  However, the structure of rank-3 lattices makes them
amenable to enumerative methods that do not generate the lattices one
by one.  The present work combines constructive classification with
enumerative combinatorics.  We \emph{generate} a large number of basic
cases, and for each case we \emph{count} lattices without generating
them.  Both are done by computation, though the tools required are
quite different.

\section{Bicolored connection graphs}
\label{sec:connection}

A graded lattice of rank 3, or a \emph{rank-3 lattice} for short, has
elements on four levels: top, coatoms, atoms, and bottom.  From now on
we ignore the top and the bottom, since the structure of a rank-3
lattice is determined by its two central levels.  These levels form a
bicolored graph, whose two distinguished color classes are the coatoms
and the atoms.  Furthermore, there are no isolated elements, and any
two elements have at most one common neighbor
(cf.~\cite{kleitman1980}).

By the above characterization, rank-3 lattices can be counted up to
isomorphism with the following nauty~\cite{nauty} one-liner:
\begin{equation}
  \texttt{genbgL -d1 -Z1 -u \$c \$a}
  \label{eq:oneliner}
\end{equation}
This generates, up to isomorphism, all bicolored graphs that have $c$
vertices in the first color class and $a$ in the second, with minimum
degree one (\texttt{-d1}), such that any two vertices in the second
class have at most one common neighbor (\texttt{-Z1}).  The switch
\texttt{-u} specifies printing only the count and not the graphs.
Since this method actually generates the graphs one by one, it is
limited to small instances.  In this way the author has computed
$R(c,a)$ for $c+a \le 21$ (OEIS~\cite{oeis}, sequence A300260).

In order to count rank-3 lattices without generating them, it is
helpful to simplify their representation.  Let us divide the atoms
into two types, according to the number of coatoms by which they are
covered.  \emph{Connectors} are the atoms which are covered by at
least two coatoms, and the other atoms are called \emph{loners}.  In
lattice theory loners are known as meet-irreducible atoms, but we want
a shorter name here.  The \emph{connection graph} is the bicolored
graph spanned by the coatoms and the connectors.  Any rank-3 lattice
is uniquely represented by (1)~its connection graph and (2)~for each
coatom $u$, an integer $\ell(u)$ indicating how many loners it covers.
This is illustrated in Fig.~\ref{fig:connection}.

%
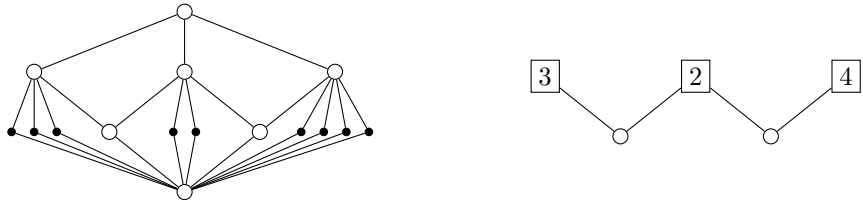
\begin{figure}[tb]
  \begin{minipage}[]{0.45\textwidth}
  \begin{tikzpicture}
    [extr/.style={circle,inner sep=0mm,minimum size=2mm,draw=black},
      coat/.style={circle,inner sep=0mm,minimum size=2mm,draw=black},
      conn/.style={circle,inner sep=0mm,minimum size=2mm,draw=black},
      lone/.style={circle,inner sep=0mm,minimum size=1mm,draw=black,fill=black}]
    \node[extr] (top) at (4,1.6) {};
    \node[extr] (bot) at (4,-.8) {};
    \node[coat] (u) at (2,.8) {};
    \node[coat] (v) at (4,.8) {};
    \node[coat] (w) at (6,.8) {};
    \node[conn] (a) at (3,0) {};
    \node[conn] (b) at (5,0) {};
    \node[lone] (u1) at (1.7,0) {};
    \node[lone] (u2) at (2.0,0) {};
    \node[lone] (u3) at (2.3,0) {};
    \node[lone] (v1) at (3.85,0) {};
    \node[lone] (v2) at (4.15,0) {};
    \node[lone] (w1) at (5.55,0) {};
    \node[lone] (w2) at (5.85,0) {};
    \node[lone] (w3) at (6.15,0) {};
    \node[lone] (w4) at (6.45,0) {};
    \draw[-] (top) -- (u);
    \draw[-] (top) -- (v);
    \draw[-] (top) -- (w);
    \draw[-] (u) -- (a);
    \draw[-] (v) -- (a);
    \draw[-] (v) -- (b);
    \draw[-] (w) -- (b);
    \draw[-] (u) -- (u1);
    \draw[-] (u) -- (u2);
    \draw[-] (u) -- (u3);
    \draw[-] (v) -- (v1);
    \draw[-] (v) -- (v2);
    \draw[-] (w) -- (w1);
    \draw[-] (w) -- (w2);
    \draw[-] (w) -- (w3);
    \draw[-] (w) -- (w4);
    \draw[-] (a) -- (bot);
    \draw[-] (b) -- (bot);
    \draw[-] (u1) -- (bot);
    \draw[-] (u2) -- (bot);
    \draw[-] (u3) -- (bot);
    \draw[-] (v1) -- (bot);
    \draw[-] (v2) -- (bot);
    \draw[-] (w1) -- (bot);
    \draw[-] (w2) -- (bot);
    \draw[-] (w3) -- (bot);
    \draw[-] (w4) -- (bot);    
  \end{tikzpicture}
  \end{minipage}
  \hfill
  \begin{minipage}[]{0.45\textwidth}
  \begin{tikzpicture}
    [coat/.style={rectangle,inner sep=1mm,minimum size=2mm,draw=black},
      conn/.style={circle,inner sep=0mm,minimum size=2mm,draw=black},
      lone/.style={circle,inner sep=0mm,minimum size=1mm,draw=black,fill=black}]
    \node[coat] (u) at (2,.8) {3};
    \node[coat] (v) at (4,.8) {2};
    \node[coat] (w) at (6,.8) {4};
    \node[conn] (a) at (3,0) {};
    \node[conn] (b) at (5,0) {};
    \draw[-] (u) -- (a);
    \draw[-] (v) -- (a);
    \draw[-] (v) -- (b);
    \draw[-] (w) -- (b);
  \end{tikzpicture}  
  \end{minipage}
  \caption{A rank-3 lattice (loners shown as small dots) and its
    connection graph (coatoms shown as boxes with numbers of loners).}
  \label{fig:connection}
\end{figure}

With any given value of $c$ there is only a finite collection of
connection graphs.  This can be seen as follows.  Let $r$ be the
number of connectors in a connection graph.  In a lattice, the covers
of two atoms are either disjoint, or intersect at a single element.
Thus we must have $r \le \binom{c}{2}$; the maximum is reached when
every pair of coatoms has its own connector.  Since the number of
connectors is bounded, the collection of connection graphs of
$c$~coatoms is finite.

For example, with $c=2$ there are two connection graphs, one with a
connector and one without.  With $c=3$, the number of connectors is $r
\in [0,3]$, and if $r=1$, the connector connects either two or three
coatoms.  So there are five connection graphs, which are shown in
Fig.~\ref{fig:c3}.

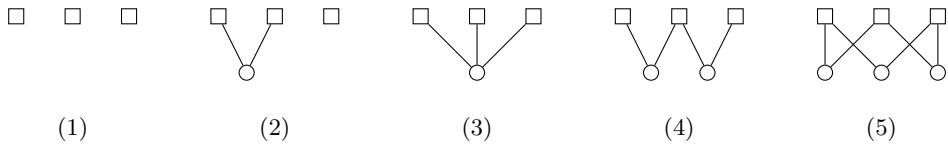
\begin{figure}[tb]
  \begin{minipage}[]{0.15\textwidth}
  \begin{tikzpicture}[scale=0.75,
    coat/.style={rectangle,inner sep=0mm,minimum size=2mm,draw=black},
      conn/.style={circle,inner sep=0mm,minimum size=2mm,draw=black},
      lone/.style={circle,inner sep=0mm,minimum size=1mm,draw=black,fill=black},
      caption/.style={rectangle,inner sep=0mm,minimum size=1mm,draw=none},
      none/.style={circle,inner sep=0mm,minimum size=2mm,draw=none,use as bounding box}]
    \node[coat] (u) at (0,1) {};
    \node[coat] (v) at (1,1) {};
    \node[coat] (w) at (2,1) {};
    \node[none] (a) at (0,0) {};
    \node[caption] (cap) at (1,-1) {\small(1)};
  \end{tikzpicture}  
  \label{graph:a}
  \end{minipage}
  \hfill
  \begin{minipage}[]{0.15\textwidth}
  \begin{tikzpicture}[scale=0.75,
    coat/.style={rectangle,inner sep=0mm,minimum size=2mm,draw=black},
      conn/.style={circle,inner sep=0mm,minimum size=2mm,draw=black},
      caption/.style={rectangle,inner sep=0mm,minimum size=1mm,draw=none},
      lone/.style={circle,inner sep=0mm,minimum size=1mm,draw=black,fill=black}]
    \node[coat] (u) at (0,1) {};
    \node[coat] (v) at (1,1) {};
    \node[coat] (w) at (2,1) {};
    \node[conn] (a) at (0.5,0) {};
    \node[caption] (cap) at (1,-1) {\small(2)};
    \draw[-] (u) -- (a);
    \draw[-] (v) -- (a);
  \end{tikzpicture}  
  \end{minipage}
  \hfill
  \begin{minipage}[]{0.15\textwidth}
  \begin{tikzpicture}[scale=0.75,
    coat/.style={rectangle,inner sep=0mm,minimum size=2mm,draw=black},
      conn/.style={circle,inner sep=0mm,minimum size=2mm,draw=black},
      caption/.style={rectangle,inner sep=0mm,minimum size=1mm,draw=none},
      lone/.style={circle,inner sep=0mm,minimum size=1mm,draw=black,fill=black}]
    \node[coat] (u) at (0,1) {};
    \node[coat] (v) at (1,1) {};
    \node[coat] (w) at (2,1) {};
    \node[conn] (a) at (1,0) {};
    \node[caption] (cap) at (1,-1) {\small(3)};
    \draw[-] (u) -- (a);
    \draw[-] (v) -- (a);
    \draw[-] (w) -- (a);
  \end{tikzpicture}  
  \end{minipage}
  \hfill
  \begin{minipage}[]{0.15\textwidth}
  \begin{tikzpicture}[scale=0.75,
    coat/.style={rectangle,inner sep=0mm,minimum size=2mm,draw=black},
      conn/.style={circle,inner sep=0mm,minimum size=2mm,draw=black},
      caption/.style={rectangle,inner sep=0mm,minimum size=1mm,draw=none},
      lone/.style={circle,inner sep=0mm,minimum size=1mm,draw=black,fill=black}]
    \node[coat] (u) at (0,1) {};
    \node[coat] (v) at (1,1) {};
    \node[coat] (w) at (2,1) {};
    \node[conn] (a) at (0.5,0) {};
    \node[conn] (b) at (1.5,0) {};
    \node[caption] (cap) at (1,-1) {\small(4)};
    \draw[-] (u) -- (a);
    \draw[-] (v) -- (a);
    \draw[-] (v) -- (b);
    \draw[-] (w) -- (b);
  \end{tikzpicture}  
  \end{minipage}
  \hfill
  \begin{minipage}[]{0.15\textwidth}
  \begin{tikzpicture}[scale=0.75,
    coat/.style={rectangle,inner sep=0mm,minimum size=2mm,draw=black},
      conn/.style={circle,inner sep=0mm,minimum size=2mm,draw=black},
      caption/.style={rectangle,inner sep=0mm,minimum size=1mm,draw=none},
      lone/.style={circle,inner sep=0mm,minimum size=1mm,draw=black,fill=black}]
    \node[coat] (u) at (0,1) {};
    \node[coat] (v) at (1,1) {};
    \node[coat] (w) at (2,1) {};
    \node[conn] (a) at (0,0) {};
    \node[conn] (b) at (1,0) {};
    \node[conn] (c) at (2,0) {};
    \node[caption] (cap) at (1,-1) {\small(5)};
    \draw[-] (u) -- (a);
    \draw[-] (v) -- (a);
    \draw[-] (u) -- (b);
    \draw[-] (w) -- (b);
    \draw[-] (v) -- (c);
    \draw[-] (w) -- (c);
  \end{tikzpicture}  
  \end{minipage}
  \caption{The five connection graphs of three coatoms.}
  \label{fig:c3}
\end{figure}

Now the unlabeled rank-3 lattices of $c$~coatoms and $a$~atoms can be
counted in two phases:
\begin{enumerate}
\begin{samepage}
\item \emph{List} all connection graphs of $c$ coatoms up to isomorphism.
\item For each connection graph, \emph{calculate} the number of ways
  to distribute $a-r$ loners among the $c$~coatoms.
\end{samepage}
\end{enumerate}

The first phase is straightforward using tools from
nauty~\cite{nauty}, although the list will be long if $c$ is large.
We use a variation of the previous one-liner~\eqref{eq:oneliner}.  Now
we allow elements in the the first color class (coatoms) to have zero
neighbors, since a coatom need not cover any connector; and we require
elements in the second color class (connectors) to have at least two
neighbors.  Thus we specify the minimum degrees in the two color
classes as zero and two (\texttt{-d0:2}).  We leave out \texttt{-u}
since we need the actual listing of the graphs.  We loop over all
possible values of $r$, from~$0$ to~$\binom{c}{2}=c(c-1)/2$.  The
following Bash script lists all connection graphs of $c$~coatoms up to
isomorphism:

\begin{verbbox}
let "rmax=c*(c-1)/2"
for ((r=0; r<=$rmax; r++))
do
    genbgL -d0:2 -Z1 $c $r
done
\end{verbbox}
\begin{equation}
  \theverbbox
  \label{eq:script}
\end{equation}

In the second phase, for each connection graph, we need to determine
how the remaining $a-r$ atoms (loners) can be located.  To this end,
we consider the problem of distributing $a-r$ identical balls into $c$
boxes.  Each such assignment corresponds to exactly one way to place
the remaining atoms and hence create a lattice.

However, a~couple of issues must be observed.  A simple thing is to
ensure that every coatom covers at least one atom.  If there are $s$
coatoms that do not cover a connector, we simply allocate one loner to
each, and distribute the remaining $n = a-r-s$ balls into $c$~boxes,
with empty boxes allowed.

Somewhat more complicated is to handle the symmetries of each
connection graph, so that we correctly count the isomorphism classes
of the lattices.  For example, in Fig.~\ref{fig:c3}, graphs (1), (3),
(5) have all coatoms in symmetric position, so we are putting balls
into three identical boxes.  But in graphs (2) and (4), two coatoms
are in symmetric position (two identical boxes) and the third is not
(a distinguished box).  Thus we must take care of the symmetry in our
counting work.  For $c$ at most three, we will settle this manually in
Section~\ref{sec:three}.  For the general case, we turn to
computational solutions.  This is described in
Section~\ref{sec:balls}.

\section{Lattices of at most three coatoms}
\label{sec:three}

With $c \le 3$ there are only a few connection graphs, and we can
treat them case-by-case.  Clearly $R(1,a)=1$.  Before going further we
recall some elementary results from the theory of enumerative
combinatorics.  Let $p_c(n)$ be the number of ways to distribute $n$
identical balls into $c$ identical boxes, empty boxes allowed.  For
all $n\ge 0$ we have~\cite[Theorem~6.4]{allenby2010}
\begin{equation}
  p_2(n) = \lfloor n/2 + 1 \rfloor,
  \label{eq:p2}
\end{equation}
\begin{equation}
  p_3(n) = \lfloor n^2/12 + n/2 + 1 \rfloor.
  \label{eq:p3}
\end{equation}
Also let $p_{2,1}(n)$ be the number of ways to distribute $n$
identical balls into two identical boxes and one distinguished box,
empty boxes allowed.  For all $n \ge 0$ we have
\begin{equation}
  p_{2,1}(n) = \sum_{i=0}^{n} p_2(i) = \lfloor n^2/4 + n + 1 \rfloor.
  \label{eq:p21}
\end{equation}
This may be recognized as the quarter-squares sequence
A002620~\cite{oeis}.  We adopt the convention that
$p_2(n)=p_3(n)=p_{2,1}(n)=0$ when $n<0$, and note that
Eqs.~\eqref{eq:p2}---\eqref{eq:p21} are then valid also for $p_2(-1)$,
$p_3(-2)$, $p_3(-1)$, and $p_{2,1}(-1)$.

\begin{theorem}
  For any $a\ge 1$, we have $R(2,a)=a$.
  \label{thm:c2}
\end{theorem}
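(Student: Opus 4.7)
The plan is to apply the two-phase counting scheme from the previous section with $c=2$. For $c=2$ there are only two connection graphs: graph $G_0$, in which the two coatoms share no connector ($r=0$), and graph $G_1$, in which the two coatoms are joined by a single connector ($r=1$). No other graphs are possible, since $r \le \binom{2}{2} = 1$. In both graphs the two coatoms are symmetric (they sit in identical positions), so in each case the loners are distributed into two \emph{identical} boxes.

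Next I would compute the contribution of each graph. For $G_0$ neither coatom covers a connector, so $s=2$ and we first allocate one loner to each coatom to guarantee minimum degree one; the remaining $n = a - 0 - 2 = a-2$ loners are then placed into two identical boxes with empty boxes allowed, giving $p_2(a-2)$ lattices. For $G_1$ both coatoms already cover the lone connector, so $s=0$ and we distribute the remaining $n = a-1$ loners into two identical boxes, giving $p_2(a-1)$ lattices.

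Finally I would add the two contributions and simplify using equation~\eqref{eq:p2}, together with the convention $p_2(-1)=0$ that takes care of $a=1$. Concretely,
\begin{equation*}
R(2,a) = p_2(a-2) + p_2(a-1) = \lfloor a/2 \rfloor + \lfloor (a+1)/2 \rfloor = a,
\end{equation*}
which is the desired identity. I do not expect any real obstacle here: the only thing requiring a bit of care is the boundary case $a=1$, where the graph $G_0$ admits no valid loner assignment and must contribute $0$; this is handled cleanly by the convention on $p_2$ introduced just before the theorem.
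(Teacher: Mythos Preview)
Your proof is correct and follows essentially the same approach as the paper: the same two connection graphs, the same identification of $r$ and $s$ in each, the same reduction to $p_2(a-1)+p_2(a-2)$, and the same handling of the boundary case $a=1$ via the convention on $p_2$. The only difference is cosmetic—you spell out the final simplification as $\lfloor a/2\rfloor+\lfloor (a+1)/2\rfloor=a$, whereas the paper just states the sum equals $a$.
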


\begin{proof}
  The lattice may or may not have a connector.  In either case, the
  two coatoms are in symmetric position.  If the lattice has a
  connector, we have $r=1$ and $s=0$, and the remaining $a-1$ atoms
  can be distributed in $p_2(a-1)$ ways.  If the lattice does not
  have a connector, we have $r=0$ and $s=2$, and the remaining $a-2$
  atoms can be distributed in $p_2(a-2)$ ways; this is zero if $a=1$.

  Adding up and simplifying, we get $R(2,a) = p_2(a-1)+p_2(a-2) = a$.
\end{proof}

\begin{theorem}
  For any $a\ge 1$, we have $R(3,a)=\lfloor (3/4)a^2 + (1/3)a + 1/4
  \rfloor$.
  \label{thm:c3}
\end{theorem}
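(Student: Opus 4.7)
The plan is to enumerate the five connection graphs of three coatoms shown in Fig.~\ref{fig:c3}, count the loner distributions for each using Eqs.~\eqref{eq:p3} and~\eqref{eq:p21}, and sum.

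For each graph I first read off the parameters $(r,s)$ and the orbits of the coatoms under the graph's automorphism group. In graphs~(1), (3), and~(5), all three coatoms form a single orbit, and $a-r-s$ equals $a-3$, $a-1$, and $a-3$ respectively, so these three graphs jointly contribute $2p_3(a-3) + p_3(a-1)$. In graphs~(2) and~(4), the three coatoms split into a symmetric pair together with one distinguished coatom (in graph~(2) the distinguished coatom is the one not incident to the connector; in graph~(4) it is the middle coatom incident to both connectors), and in both cases $a-r-s = a-2$, so these two graphs jointly contribute $2p_{2,1}(a-2)$. Hence
\[
R(3,a) = 2p_3(a-3) + p_3(a-1) + 2p_{2,1}(a-2).
\]

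Next I expand using Eqs.~\eqref{eq:p3} and~\eqref{eq:p21}. Dropping the inner floor brackets produces a total of $(9a^2+4a+13)/12$, which exceeds the target $(9a^2+4a+3)/12 = (3/4)a^2 + (1/3)a + 1/4$ by exactly $5/6$. The remaining task is to show that the five inner truncations on the left, combined with the single outer truncation on the right, always account for this excess of~$5/6$.

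The main obstacle is this final floor-function accounting, but it is purely mechanical. The fractional parts of $p_3(a-3)$ and $p_3(a-1)$ depend on $a$ modulo~$6$, the fractional part of $p_{2,1}(a-2)$ depends on $a$ modulo~$2$, and the fractional part of the right-hand expression depends on $a$ modulo~$6$; hence the difference of the two sides of the desired identity is periodic in~$a$ with period dividing~$6$. It therefore suffices to verify the identity for six consecutive values, say $a = 3, 4, \ldots, 8$, after which periodicity extends it to all $a \ge 3$. The two boundary cases $a=1$ and $a=2$ are then checked directly, using the convention from Section~\ref{sec:three} that $p_3$ and $p_{2,1}$ vanish at negative arguments.
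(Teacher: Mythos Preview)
Your proof is correct and follows the same route as the paper: enumerate the five connection graphs of Fig.~\ref{fig:c3}, identify the coatom symmetries, obtain $R(3,a)=2p_3(a-3)+p_3(a-1)+2p_{2,1}(a-2)$, and then simplify using \eqref{eq:p3} and \eqref{eq:p21}. The only difference is cosmetic: where the paper simply says ``simplifying,'' you spell out the floor bookkeeping via a period-$6$ check, which is a perfectly valid (and arguably clearer) way to finish.
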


\begin{proof}
  We consider the five possible connection graphs (recall
  Fig.~\ref{fig:c3}).

  Graph (1): No connectors, $r+s=3$.  The remaining $a-3$ atoms can be
  distributed among the three symmetric coatoms in $p_3(a-3)$ ways.

  Graph (2): One atom connects two coatoms, $r+s=2$.  The two
  connected coatoms are in symmetric position, so the remaining $a-2$
  atoms can be distributed in $p_{2,1}(a-2)$ ways.

  Graph (3): One atom connects three coatoms, $r+s=1$.  All coatoms are
  symmetric, so the remaining $a-1$ atoms can be distributed in
  $p_3(a-1)$ ways.

  Graph (4): Two connectors, $r+s=2$.  The two coatoms at the ends are
  symmetric, so the remaining $a-2$ atoms can be distributed in
  $p_{2,1}(a-2)$ ways.

  Graph (5): Three connectors, $r+s=3$.  All coatoms are symmetric, so
  the remaining $a-3$ atoms can be distributed in $p_3(a-3)$ ways.

  In all five cases, if $a < r+s$, then $p_3$ or $p_{2,1}$ has a
  negative argument, and a zero value by our convention.  Adding up we
  get
  \[
  R(3,a) = 2p_3(a-3) + p_3(a-1) + 2p_{2,1}(a-2).
  \]
  Substituting \eqref{eq:p3} and \eqref{eq:p21}, observing that they
  are valid for all $a \ge 1$ even if the arguments become negative,
  and simplifying, we obtain the stated result.
\end{proof}

\begin{figure}[tb]
  \begin{minipage}[]{0.15\textwidth}
  \begin{tikzpicture}[scale=0.4,
    coat/.style={rectangle,inner sep=0mm,minimum size=2mm,draw=black},
      conn/.style={circle,inner sep=0mm,minimum size=2mm,draw=black},
      lone/.style={circle,inner sep=0mm,minimum size=1mm,draw=black,fill=black},
      none/.style={circle,inner sep=0mm,minimum size=2mm,draw=none,use as bounding box}]
    \node[coat] (u) at (0,1) {};
    \node[coat] (v) at (1,1) {};
    \node[coat] (w) at (2,1) {};
    \node[coat] (z) at (3,1) {};
    \node[none] (a) at (0,0) {};
  \end{tikzpicture}
  \end{minipage}
  \hfill
  \begin{minipage}[]{0.15\textwidth}
  \begin{tikzpicture}[scale=0.4,
    coat/.style={rectangle,inner sep=0mm,minimum size=2mm,draw=black},
      conn/.style={circle,inner sep=0mm,minimum size=2mm,draw=black},
      lone/.style={circle,inner sep=0mm,minimum size=1mm,draw=black,fill=black}]
    \node[coat] (u) at (0,1) {};
    \node[coat] (v) at (1,1) {};
    \node[coat] (w) at (2,1) {};
    \node[coat] (z) at (3,1) {};
    \node[conn] (a) at (0.5,0) {};
    \draw[-] (u) -- (a);
    \draw[-] (v) -- (a);
  \end{tikzpicture}
  \end{minipage}
  \hfill
  \begin{minipage}[]{0.15\textwidth}
  \begin{tikzpicture}[scale=0.4,
    coat/.style={rectangle,inner sep=0mm,minimum size=2mm,draw=black},
      conn/.style={circle,inner sep=0mm,minimum size=2mm,draw=black},
      lone/.style={circle,inner sep=0mm,minimum size=1mm,draw=black,fill=black}]
    \node[coat] (u) at (0,1) {};
    \node[coat] (v) at (1,1) {};
    \node[coat] (w) at (2,1) {};
    \node[coat] (z) at (3,1) {};
    \node[conn] (a) at (1,0) {};
    \draw[-] (u) -- (a);
    \draw[-] (v) -- (a);
    \draw[-] (w) -- (a);
  \end{tikzpicture}
  \end{minipage}
  \hfill
  \begin{minipage}[]{0.15\textwidth}
  \begin{tikzpicture}[scale=0.4,
    coat/.style={rectangle,inner sep=0mm,minimum size=2mm,draw=black},
      conn/.style={circle,inner sep=0mm,minimum size=2mm,draw=black},
      lone/.style={circle,inner sep=0mm,minimum size=1mm,draw=black,fill=black}]
    \node[coat] (u) at (0,1) {};
    \node[coat] (v) at (1,1) {};
    \node[coat] (w) at (2,1) {};
    \node[coat] (z) at (3,1) {};
    \node[conn] (a) at (1.5,0) {};
    \draw[-] (u) -- (a);
    \draw[-] (v) -- (a);
    \draw[-] (w) -- (a);
    \draw[-] (z) -- (a);
  \end{tikzpicture}
  \end{minipage}
  \hfill
  \begin{minipage}[]{0.15\textwidth}
  \begin{tikzpicture}[scale=0.4,
    coat/.style={rectangle,inner sep=0mm,minimum size=2mm,draw=black},
      conn/.style={circle,inner sep=0mm,minimum size=2mm,draw=black},
      lone/.style={circle,inner sep=0mm,minimum size=1mm,draw=black,fill=black}]
    \node[coat] (u) at (0,1) {};
    \node[coat] (v) at (1,1) {};
    \node[coat] (w) at (2,1) {};
    \node[coat] (z) at (3,1) {};
    \node[conn] (a) at (0.5,0) {};
    \node[conn] (b) at (1.5,0) {};
    \draw[-] (u) -- (a);
    \draw[-] (v) -- (a);
    \draw[-] (v) -- (b);
    \draw[-] (w) -- (b);
  \end{tikzpicture}
  \end{minipage}
  \hfill
  \begin{minipage}[]{0.15\textwidth}
  \begin{tikzpicture}[scale=0.4,
    coat/.style={rectangle,inner sep=0mm,minimum size=2mm,draw=black},
      conn/.style={circle,inner sep=0mm,minimum size=2mm,draw=black},
      lone/.style={circle,inner sep=0mm,minimum size=1mm,draw=black,fill=black}]
    \node[coat] (u) at (0,1) {};
    \node[coat] (v) at (1,1) {};
    \node[coat] (w) at (2,1) {};
    \node[coat] (z) at (3,1) {};
    \node[conn] (a) at (0.5,0) {};
    \node[conn] (b) at (2.5,0) {};
    \draw[-] (u) -- (a);
    \draw[-] (v) -- (a);
    \draw[-] (w) -- (b);
    \draw[-] (z) -- (b);
  \end{tikzpicture}
  \end{minipage}
  \\[0.5cm]
  \begin{minipage}[]{0.15\textwidth}
  \centering
  \begin{tikzpicture}[scale=0.4,
    coat/.style={rectangle,inner sep=0mm,minimum size=2mm,draw=black},
      conn/.style={circle,inner sep=0mm,minimum size=2mm,draw=black},
      lone/.style={circle,inner sep=0mm,minimum size=1mm,draw=black,fill=black}]
    \node[coat] (u) at (0,1) {};
    \node[coat] (v) at (1,1) {};
    \node[coat] (w) at (2,1) {};
    \node[coat] (z) at (3,1) {};
    \node[conn] (a) at (1,0) {};
    \node[conn] (b) at (2.5,0) {};
    \draw[-] (u) -- (a);
    \draw[-] (v) -- (a);
    \draw[-] (w) -- (a);
    \draw[-] (w) -- (b);
    \draw[-] (z) -- (b);
  \end{tikzpicture}
  \end{minipage}
  \hfill
  \begin{minipage}[]{0.15\textwidth}
    \centering
  \begin{tikzpicture}[scale=0.4,
    coat/.style={rectangle,inner sep=0mm,minimum size=2mm,draw=black},
      conn/.style={circle,inner sep=0mm,minimum size=2mm,draw=black},
      lone/.style={circle,inner sep=0mm,minimum size=1mm,draw=black,fill=black}]
    \node[coat] (u) at (0,1) {};
    \node[coat] (v) at (1,1) {};
    \node[coat] (w) at (2,1) {};
    \node[coat] (z) at (3,1) {};
    \node[conn] (a) at (0,0) {};
    \node[conn] (b) at (1,0) {};
    \node[conn] (c) at (2,0) {};
    \draw[-] (u) -- (a);
    \draw[-] (v) -- (a);
    \draw[-] (u) -- (b);
    \draw[-] (w) -- (b);
    \draw[-] (v) -- (c);
    \draw[-] (w) -- (c);
  \end{tikzpicture}
  \end{minipage}
  \hfill
  \begin{minipage}[]{0.15\textwidth}
    \centering
  \begin{tikzpicture}[scale=0.4,
    coat/.style={rectangle,inner sep=0mm,minimum size=2mm,draw=black},
      conn/.style={circle,inner sep=0mm,minimum size=2mm,draw=black},
      lone/.style={circle,inner sep=0mm,minimum size=1mm,draw=black,fill=black}]
    \node[coat] (u) at (0,1) {};
    \node[coat] (v) at (1,1) {};
    \node[coat] (w) at (2,1) {};
    \node[coat] (z) at (3,1) {};
    \node[conn] (a) at (0,0) {};
    \node[conn] (b) at (1,0) {};
    \node[conn] (c) at (2,0) {};
    \draw[-] (u) -- (a);
    \draw[-] (v) -- (a);
    \draw[-] (u) -- (b);
    \draw[-] (w) -- (b);
    \draw[-] (u) -- (c);
    \draw[-] (z) -- (c);
  \end{tikzpicture}
  \end{minipage}
  \hfill
  \begin{minipage}[]{0.20\textwidth}
  \centering
  \begin{tikzpicture}[scale=0.4,
    coat/.style={rectangle,inner sep=0mm,minimum size=2mm,draw=black},
      conn/.style={circle,inner sep=0mm,minimum size=2mm,draw=black},
      lone/.style={circle,inner sep=0mm,minimum size=1mm,draw=black,fill=black}]
    \node[coat] (u) at (0,1) {};
    \node[coat] (v) at (1,1) {};
    \node[coat] (w) at (2,1) {};
    \node[coat] (z) at (3,1) {};
    \node[conn] (a) at (0.5,0) {};
    \node[conn] (b) at (1.5,0) {};
    \node[conn] (c) at (2.5,0) {};
    \draw[-] (u) -- (a);
    \draw[-] (v) -- (a);
    \draw[-] (v) -- (b);
    \draw[-] (w) -- (b);
    \draw[-] (w) -- (c);
    \draw[-] (z) -- (c);
  \end{tikzpicture}
  \end{minipage}
  \hfill
  \begin{minipage}[]{0.22\textwidth}
    \centering
  \begin{tikzpicture}[scale=0.4,
    coat/.style={rectangle,inner sep=0mm,minimum size=2mm,draw=black},
      conn/.style={circle,inner sep=0mm,minimum size=2mm,draw=black},
      lone/.style={circle,inner sep=0mm,minimum size=1mm,draw=black,fill=black}]
    \node[coat] (u) at (0,1) {};
    \node[coat] (v) at (1,1) {};
    \node[coat] (w) at (2,1) {};
    \node[coat] (z) at (3,1) {};
    \node[conn] (a) at (0,0) {};
    \node[conn] (b) at (1,0) {};
    \node[conn] (c) at (2,0) {};
    \draw[-] (u) -- (a);
    \draw[-] (v) -- (a);
    \draw[-] (u) -- (b);
    \draw[-] (w) -- (b);
    \draw[-] (v) -- (c);
    \draw[-] (w) -- (c);
    \draw[-] (z) -- (c);
  \end{tikzpicture}
  \end{minipage}
  \\[0.5cm]
  \begin{minipage}[]{0.15\textwidth}
  \centering
  \begin{tikzpicture}[scale=0.4,
    coat/.style={rectangle,inner sep=0mm,minimum size=2mm,draw=black},
      conn/.style={circle,inner sep=0mm,minimum size=2mm,draw=black},
      lone/.style={circle,inner sep=0mm,minimum size=1mm,draw=black,fill=black}]
    \node[coat] (u) at (0,1) {};
    \node[coat] (v) at (1,1) {};
    \node[coat] (w) at (2,1) {};
    \node[coat] (z) at (3,1) {};
    \node[conn] (a) at (0,0) {};
    \node[conn] (b) at (1,0) {};
    \node[conn] (c) at (2,0) {};
    \node[conn] (d) at (3,0) {};
    \draw[-] (u) -- (a);
    \draw[-] (v) -- (a);
    \draw[-] (u) -- (b);
    \draw[-] (w) -- (b);
    \draw[-] (v) -- (c);
    \draw[-] (w) -- (c);
    \draw[-] (w) -- (d);
    \draw[-] (z) -- (d);
  \end{tikzpicture}
  \end{minipage}
  \hfill
  \begin{minipage}[]{0.15\textwidth}
  \centering
  \begin{tikzpicture}[scale=0.4,
    coat/.style={rectangle,inner sep=0mm,minimum size=2mm,draw=black},
      conn/.style={circle,inner sep=0mm,minimum size=2mm,draw=black},
      lone/.style={circle,inner sep=0mm,minimum size=1mm,draw=black,fill=black}]
    \node[coat] (u) at (0,1) {};
    \node[coat] (v) at (1,1) {};
    \node[coat] (w) at (2,1) {};
    \node[coat] (z) at (3,1) {};
    \node[conn] (a) at (0,0) {};
    \node[conn] (b) at (1,0) {};
    \node[conn] (c) at (2,0) {};
    \node[conn] (d) at (3,0) {};
    \draw[-] (u) -- (a);
    \draw[-] (v) -- (a);
    \draw[-] (w) -- (a);
    \draw[-] (u) -- (b);
    \draw[-] (z) -- (b);
    \draw[-] (v) -- (c);
    \draw[-] (z) -- (c);
    \draw[-] (w) -- (d);
    \draw[-] (z) -- (d);
  \end{tikzpicture}
  \end{minipage}
  \hfill
  \begin{minipage}[]{0.15\textwidth}
  \centering
  \begin{tikzpicture}[scale=0.4,
    coat/.style={rectangle,inner sep=0mm,minimum size=2mm,draw=black},
      conn/.style={circle,inner sep=0mm,minimum size=2mm,draw=black},
      lone/.style={circle,inner sep=0mm,minimum size=1mm,draw=black,fill=black}]
    \node[coat] (u) at (0,1) {};
    \node[coat] (v) at (1,1) {};
    \node[coat] (w) at (2,1) {};
    \node[coat] (z) at (3,1) {};
    \node[conn] (a) at (0,0) {};
    \node[conn] (b) at (1,0) {};
    \node[conn] (c) at (2,0) {};
    \node[conn] (d) at (3,0) {};
    \draw[-] (u) -- (a);
    \draw[-] (v) -- (a);
    \draw[-] (v) -- (b);
    \draw[-] (w) -- (b);
    \draw[-] (w) -- (c);
    \draw[-] (z) -- (c);
    \draw[-] (z) -- (d);
    \draw[-] (u) -- (d);
  \end{tikzpicture}
  \end{minipage}
  \hfill
  \begin{minipage}[]{0.20\textwidth}
  \centering
  \begin{tikzpicture}[scale=0.4,
    coat/.style={rectangle,inner sep=0mm,minimum size=2mm,draw=black},
      conn/.style={circle,inner sep=0mm,minimum size=2mm,draw=black},
      lone/.style={circle,inner sep=0mm,minimum size=1mm,draw=black,fill=black}]
    \node[coat] (u) at (0,1) {};
    \node[coat] (v) at (1,1) {};
    \node[coat] (w) at (2,1) {};
    \node[coat] (z) at (3,1) {};
    \node[conn] (a) at (-0.5,0) {};
    \node[conn] (b) at (0.5,0) {};
    \node[conn] (c) at (1.5,0) {};
    \node[conn] (d) at (2.5,0) {};
    \node[conn] (e) at (3.5,0) {};
    \draw[-] (u) -- (a);
    \draw[-] (u) -- (b);
    \draw[-] (u) -- (c);
    \draw[-] (v) -- (a);
    \draw[-] (v) -- (d);
    \draw[-] (w) -- (b);
    \draw[-] (w) -- (e);
    \draw[-] (z) -- (c);
    \draw[-] (z) -- (d);
    \draw[-] (z) -- (e);
  \end{tikzpicture}
  \end{minipage}
  \hfill
\begin{minipage}[]{0.22\textwidth}
  \centering
  \begin{tikzpicture}[scale=0.4,
    coat/.style={rectangle,inner sep=0mm,minimum size=2mm,draw=black},
      conn/.style={circle,inner sep=0mm,minimum size=2mm,draw=black},
      lone/.style={circle,inner sep=0mm,minimum size=1mm,draw=black,fill=black}]
    \node[coat] (u) at (0,1) {};
    \node[coat] (v) at (1,1) {};
    \node[coat] (w) at (2,1) {};
    \node[coat] (z) at (3,1) {};
    \node[conn] (a) at (-1,0) {};
    \node[conn] (b) at (0,0) {};
    \node[conn] (c) at (1,0) {};
    \node[conn] (d) at (2,0) {};
    \node[conn] (e) at (3,0) {};
    \node[conn] (f) at (4,0) {};
    \draw[-] (u) -- (a);
    \draw[-] (v) -- (a);
    \draw[-] (u) -- (b);
    \draw[-] (w) -- (b);
    \draw[-] (u) -- (c);
    \draw[-] (z) -- (c);
    \draw[-] (v) -- (d);
    \draw[-] (w) -- (d);
    \draw[-] (v) -- (e);
    \draw[-] (z) -- (e);
    \draw[-] (w) -- (f);
    \draw[-] (z) -- (f);
  \end{tikzpicture}
  \end{minipage}
  \caption{The sixteen connection graphs of four coatoms.}
  \label{fig:c4}
\end{figure}
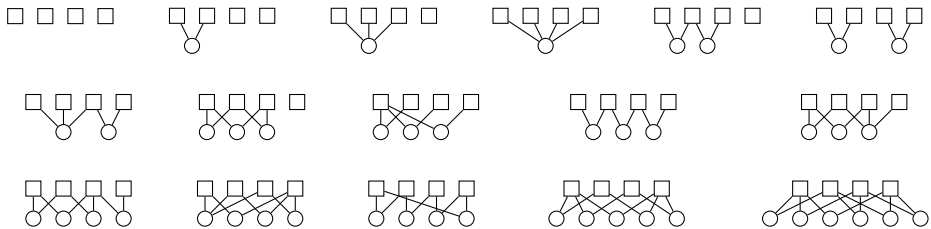

For $c=4$ one could continue in the same manner.  But there are
sixteen connection graphs (see Fig.~\ref{fig:c4}), exhibiting various
kinds of symmetry, and manual case-by-case analysis becomes tedious
and error-prone.  We now turn to a computational method that analyzes
the symmetries of the connection graphs, and also provides a numerical
solution to our counting problem.  In fact, the method will also help
us to derive closed form expressions for $R(4,a)$, $R(5,a)$, $R(6,a)$,
and $R(7,a)$.

\section{Balls into boxes with symmetry}
\label{sec:balls}

We now address the second phase of our task: given a bicolored
connection graph~$\ItGamma$, and an integer~$n$, count the ways of
distributing $n$~identical balls (the loners) among the vertices in
the first color class (the coatoms).  By saying ``bicolored'' we
distinguish the connectors from the coatoms.  Otherwise, for example,
in graph~5 of Fig.~\ref{fig:c3} there would be an automorphism that
swaps the classes.  Alternatively, we can orient the edges from atoms
to coatoms.

The count is affected by the automorphism group $\Aut(\ItGamma)$, or
in fact by its action \emph{on the coatoms}, since that is where we
place the balls.  For example, although in Fig.~\ref{fig:c3} the
graphs (1), (3), (5) are different, they have the same symmetry
between the coatoms.  For us they have the same kind of solution,
namely $p_3(\cdot)$ as we saw in the previous section.

Let us state the ball distribution problem in a more general form that
does not involve lattices or graphs.

\begin{problem}
  Given a group $G$ that acts on a set of boxes $\{1,2,\ldots,c\}$,
  count the ways of distributing $n$ identical balls in the boxes,
  when empty boxes are allowed, and two distributions related by an
  element of $G$ are considered equal.
  \label{prob:boxes}
\end{problem}

Some special cases of Problem~\ref{prob:boxes} are well-known.  For
example, if $G$ is the trivial group, then we have the problem with
$c$~distinguished boxes, and the count is $\binom{n+c-1}{c-1}$ by the
stars and bars argument.  If $G$ is the symmetric group, then the
boxes are in effect identical, and the solution involves partitions of
the integer~$n$ into at most $c$ nonnegative parts.

Despite its seemingly elementary nature, Problem~\ref{prob:boxes} is
not commonly found in textbooks.  However, in a thesis by
Lison\v{e}k~\cite{lisonek1994}, Section~4.3 is dedicated to the
problem of counting such ball distributions.  Lison\v{e}k shows that
the distributions (which he calls $G$-partitions) can be counted by an
application of the cycle index theorem from the Redfield--P\'olya
counting theory.  For a general introduction to the theory see for
example~\cite{cameron2017}.  Lison\v{e}k further shows that the count
has a representation as a quasipolynomial; we will return to this
topic later.  Beware that Lison\v{e}k's symbols are the reverse of
ours, as he puts $c$~balls into $n$~boxes.  Some other mentions of the
problem include~\cite[Chapter~14]{beekman2017} and \cite{cameron2018}.

For completeness of exposition, we describe a solution to
Problem~\ref{prob:boxes} here, without any claim of originality.  In
our application, we have $G=\Aut(\ItGamma)$, and we consider its
action on the $c$~coatoms.

For each group element $g \in G$, let $m_j(g)$ be the number of cycles
of length~$j$ in the disjoint cycle representation of $g$, and define
the cycle index monomial in $c$ indeterminates $t_1,\ldots,t_c$ as
follows:
\begin{equation*}
  z_g(t_1,\ldots,t_c) = t_1^{m_1(g)} t_2^{m_2(g)} \cdots t_c^{m_c(g)}.
\end{equation*}
The cycle index of $G$ (in its action on the boxes) is the average of
the cycle index monomials,
\begin{equation*}
  Z_G(t_1,\ldots,t_c) = \frac{1}{|G|} \sum_{g \in G} z_g(t_1,\ldots,t_c).
\end{equation*}

A distribution of balls into $c$ boxes (loners to coatoms in our
application) is described by a function $\ell : \{1,2,\ldots,c\} \to
\Naturals$, with $\ell(i)$ recording how many balls are in the $i$th
box.  Using terminology from counting theory, the nonnegative integers
$\ell(i)$ are \emph{figures} attached to each box.  Each figure has a
weight, which is here simply the integer itself.  The weight of a
function $\ell$ is $\sum_{i=1}^c \ell(i)$.  This is the total number
of balls in the distribution, so we want to count functions of
weight~$n$.

We further define a figure-counting series
\begin{equation}
  A(x) = 1+x+x^2+\ldots = 1/(1-x),
  \label{eq:figurecounting}
\end{equation}
which says that there exists one figure of each nonnegative weight
(the integer itself).  We want the function-counting series
\begin{equation*}
  B(x) = \sum_{n \ge 0} b_n x^n,
\end{equation*}
whose coefficient $b_n$ counts, up to symmetry, the functions $\ell$
of weight $n$.  By the cycle index theorem~\cite[Theorem
  7.3]{cameron2017} the function-counting series is
\begin{equation}
  B(x) = Z_G(A(x),A(x^2),\ldots,A(x^c)).
  \label{eq:functioncounting}
\end{equation}
Thus, as soon as we know the cycle index $Z_G$,
Eq.~\eqref{eq:functioncounting} gives a series whose coefficients are
the counts we desire (for all $n \ge 0$).  This amounts to a very
short piece of GAP code (Program~\ref{program:balls}).  As an
implementation detail, if we are content with a finite sequence of
results, we may replace the series $A(x)$ with the polynomial
$1+x+\ldots+x^n$ without affecting the coefficients of $B(x)$ up to
$x^n$.

\begin{program}[tb]
\begin{verbatim}
GroupBalls := function(ZG, c, n)
    local i,x,A,Axi,B;
    # Cycle index may use indeterminates 1..c, take next for x.
    x := Indeterminate(Rationals, c+1);
    # Figure-counting polynomial up to n, and substitute x^i.
    A := (1-x^(n+1)) / (1-x);
    Axi := [];
    for i in [1..c] do
        Axi[i] := Value(A, [x], [x^i]);
    od;
    # Function-counting polynomial
    B := Value(ZG, [1..c], Axi);
    # Pick the coefficients for x^0, ..., x^n
    return CoefficientsOfUnivariatePolynomial(B){[1..n+1]};
end;
\end{verbatim}
\caption{Count distributions of $0,\ldots,n$ balls into $c$ boxes,
  whose permutation group has cycle index $Z_G$.  Counts returned as a
  vector.}
\label{program:balls}
\end{program}

\begin{example}
  \label{ex:simple}
  \begin{samepage}
  Consider the following connection graph of four coatoms
  $a,b,c,d$.
  \begin{center}
  \begin{minipage}[]{0.20\textwidth}
  \centering
  \begin{tikzpicture}[scale=0.8,
    coat/.style={rectangle, minimum size=2mm,draw=black},
      conn/.style={circle,inner sep=0mm,minimum size=2mm,draw=black},
      lone/.style={circle,inner sep=0mm,minimum size=1mm,draw=black,fill=black}]
    \node[coat, label=above:$a$] (a) at (0,1) {};
    \node[coat, label=above:$b$] (b) at (1,1) {};
    \node[coat, label=above:$c$] (c) at (2,1) {};
    \node[coat, label=above:$d$] (d) at (3,1) {};
    \node[conn] (p) at (0.5,0) {};
    \node[conn] (q) at (1.5,0) {};
    \node[conn] (r) at (2.5,0) {};
    \draw[-] (a) -- (p);
    \draw[-] (b) -- (p);
    \draw[-] (b) -- (q);
    \draw[-] (c) -- (q);
    \draw[-] (c) -- (r);
    \draw[-] (d) -- (r);
  \end{tikzpicture}
  \end{minipage}
  \end{center}
  \end{samepage}
  The automorphism group $G$ has two elements.  Their actions on the
  coatoms, written in the disjoint cycle notation, are the identity
  permutation $g_1 = (a)(b)(c)(d)$ and the reflection $g_2 =
  (a,d)(b,c)$.  Now $g_1$ has four cycles of length one, and $g_2$ has
  two cycles of length two, so their cycle index monomials are
  $z_{g_1}(t_1,t_2,t_3,t_4) = t_1^4$ and $z_{g_2}(t_1,t_2,t_3,t_4) =
  t_2^2$.  Thus we have
  \[
  Z_G(t_1,t_2,t_3,t_4) = \frac{1}{2} \left( t_1^4 + t_2^2 \right).
  \]
  Let us then count the ways, up to symmetry, of distributing $n$
  balls (loners) to the four boxes (coatoms), when
  $n=0,1,2,\ldots,10$.  For this finite sequence we can use a
  figure-counting polynomial $ A(x) = 1 + x + x^2 + \ldots + x^{10}. $
  Substituting this into \eqref{eq:functioncounting} we obtain a
  polynomial whose low-order coefficients, from $x^0$ to $x^{10}$,
  indicate the counts we desire: $1, 2, 6, 10, 19, 28, 44, 60, 85,
  110, 146.$ \hfill$\square$
\end{example}

Of course, the counting problem in Example~\ref{ex:simple} could have
been solved by more elementary combinatorial methods.  We may also
recognize the counts as the OEIS sequence A005993, hence finding
useful formulas and literary references.  But the benefit of employing
the cycle index theorem is that the process can be fully automated.
With a relatively simple GAP program we can easily process millions of
connection graphs.  This is illustrated in the next section.

\section{Implementation and numerical results}
\label{sec:implementation}

The first phase, generating the connection graphs, was performed for
$c=2,3,\ldots,9$ with \texttt{genbgL}, using a script similar
to~\eqref{eq:script}.  The connection graphs were stored in the
machine-readable Graph6 format~\cite{nauty}, and further compressed
with \texttt{xz}~\cite{xz}, which is quite efficient here: for $c=9$,
the Graph6 file takes $4.3$~GB before compression (52 bytes per
graph), but only $144$~MB after compression (1.7 bytes per graph).

The second phase was implemented as a GAP program.  It uses
\texttt{IteratorFromDigraphFile} from the Digraphs package to read in
the connection graphs.  For each graph it counts the ball
distributions as described in Section~\ref{sec:balls}, and by adding
up the counts it computes $R(c,a)$.  The program is here summarized in
Algorithm~\ref{alg:main}, and its full code is available in
Bitbucket~\cite{bitbucket}.

\begin{algorithm}[tb]
  \caption{Given $c, a_\text{max}$, calculate $R(c,a)$ for $a=0,1,2,\ldots,a_\text{max}$.}
  \label{alg:main}
  \begin{algorithmic}[1]
    \State $R \gets (0,\ldots,0)$
    \Comment{Initialize counts: $a_\text{max}$+1 zeros}
    \State $\mathcal{B} \gets ()$
    \Comment{Lookup table for results by cycle index}
    \For{$\ItGamma \in \texttt{IteratorFromDigraphFile}$}
    \Comment{Read in connection graphs}
      \State $c \gets \text{number of coatoms in $\ItGamma$}$
      \State $r \gets \text{number of connectors in $\ItGamma$}$
      \State $s \gets \text{number of coatoms not covering atoms in $\ItGamma$}$
      \State $G \gets \texttt{AutomorphismGroup}(\ItGamma)$
      \Comment{In Digraphs package}
      \State $Z_G \gets \texttt{CycleIndex}(G, \{1,\ldots,c\})$
      \Comment{GAP builtin function}
      \If{$\mathcal{B}(Z_G)$ exists}
      \Comment{Look up}
        \State $B \gets \mathcal{B}(Z_G)$
        \Comment{Use the memorized result}
      \Else
        \State $B \gets \texttt{GroupBalls}(Z_g, c, a_\text{max})$
        \Comment{Count ball distributions}
        \State $\mathcal{B}(Z_G) \gets B$
        \Comment{Memorize result}
      \EndIf
      \State Shift $B$ up by $r+s$ positions, prepending zeros, truncating after $a_\text{max}+1$.
      \State $R \gets R+B$
      \Comment{Accumulate counts}
    \EndFor
    \State \Return $R$
  \end{algorithmic}
\end{algorithm}

The values of $R(c,a)$ were computed up to $c=9$ and $a=1000$.  Some
aspects of the computation are illustrated in
Table~\ref{table:computation}.  Note that although there is a large number of
connection graphs, many graphs can have the same cycle index, and
\texttt{GroupBalls} is called only once for each different cycle index
(we use a lookup table).

The fourth column in Table~\ref{table:computation} shows the count of
connection graphs whose structure makes all coatoms distinguished,
that is, graphs such that the action of the automorphism group on the
coatoms is trivial.  Most connection graphs are of this type when
$c=8$ or $c=9$.  This is similar to the phenomenon that large finite
graphs tend to have trivial automorphism groups (cf. sequence A003400
in OEIS~\cite{oeis}).

With $c=9$, most time was spent in generating the connection graphs
and analyzing their symmetries.  In comparison, calculating the counts
with \texttt{GroupBalls} took relatively little time.  The computation
times should be taken as indicative only.  They are the elapsed times
when running on a single Intel Xeon E5-2680 core, with a nominal clock
frequency of 2.4~GHz.  The implementation was certainly not fully
optimized and there may be plenty of room for improvement.  We used
the following program versions: \texttt{genbgL} 1.4, GAP
4.8.10~\cite{gap}, and Digraphs 0.11.0~\cite{digraphs}.

Some of the numbers for $c \le 9$ and $a \le 1000$ are listed in
Tables~\ref{table:values37} and~\ref{table:values89}.  Since
$R(9,1000) \approx 1.775 \times 10^{27}$, it would not have been
practical to count the lattices by generating them.  Full listings of
the numerical results and the connection graphs are available in
Bitbucket~\cite{bitbucket}.

\begin{table}[tb]
\caption{Some details of the computations.
  Computation time is broken into parts:
  genbg = generating the connection graphs, aut+cyc
  = finding automorphism groups and cycle indices, counting =
  calculating the counts.\label{table:computation}}
{\begin{tabular}{crrrrrr}
\toprule
      & \# connection  & \# cycle   & \# graphs with   & time (s) & time (s) & time (s) \\
  $c$ & graphs         & indices    & trivial action   & genbg    & aut+cyc  & counting \\
\midrule
2 &          2 &    1 &        0 &     0.0  &      0.0 &      0.1 \\
3 &          5 &    2 &        0 &     0.0  &      0.0 &      0.7 \\
4 &         16 &    6 &        0 &     0.0  &      0.0 &      4.0 \\
5 &         72 &   11 &        2 &     0.0  &      0.1 &     13.0 \\
6 &        592 &   26 &      101 &     0.0  &      0.5 &     49.6 \\
7 &      10808 &   38 &     4716 &     0.7  &      7.7 &    122.0 \\
8 &     552251 &   87 &   400840 &    61.4  &    284.5 &    459.6 \\
9 &   82856695 &  142 & 74449534 & 19200.7  &  44211.1 &   1297.7 \\
\bottomrule
\end{tabular}}
\end{table}

\section{Obtaining functional forms}
\label{sec:functional}

In this section we show that for any $c \ge 1$, the function $R(c,a)$
has a representation as a quasipolynomial in~$a$ (an existence
result).  Furthermore we find explicit quasipolynomials when
$c \le 7$.

We use Lison\v{e}k's results~\cite{lisonek1994}, which show that the
number of distributions of $n$~balls into boxes
(Problem~\ref{prob:boxes}) is a quasipolynomial in~$n$.  In our
application, we take the sum of such quasipolynomials over all
connection graphs to obtain $R(c,a)$.

For each connection graph $\ItGamma$, let $G=\Aut(\ItGamma)$, and
consider its restricted action on the $c$~coatoms of~$\ItGamma$.
The function-counting series~\eqref{eq:functioncounting} is a
generating function whose coefficients are the numbers of ways to
distribute $n \ge 0$ balls to the $c$ coatoms.  The series has a
special form, a rational function whose denominator consists of
factors of the form $(1-x^i)$.  From this observation it follows
(Lison\v{e}k's Theorem 4.3.5~\cite{lisonek1994}) that the coefficients
are quasipolynomials of~$n$.

A function $f: \Naturals \to \Complex$ is a \emph{quasipolynomial} of
quasiperiod~$N$, if~there are polynomials $P_0,P_1,\ldots,P_{N-1}$
such that for all $n \ge 0$,
\begin{equation*}
  f(n) = P_k(n) \qquad\text{when $n \equiv k \pmod N$}.
\end{equation*}
Note that $N$ need not be minimal. This definition is from
Stanley~\cite{stanley2012}.  Lison\v{e}k uses a slightly different
definition that only requires the function to agree with the
polynomials from some point $n_0$ onwards.  We shall use Stanley's
definition here.

Quasipolynomials, also known as polynomials on residue classes (PORC),
are extremely versatile for expressing combinatorial quantities, which
often depend on the parity of the argument, or its residue modulo some
integer $N$.  Typical examples are floors of polynomials, such as
\eqref{eq:p2}---\eqref{eq:p21}.
We recall the following from Stanley:

\begin{proposition}[Part of Proposition 4.4.1~\cite{stanley2012}]
  A function $f : \Naturals \to \Complex$ is a quasipolynomial of
  quasiperiod $N$, if it has a rational generating function
  \[
  \sum_{n \ge 0} f(n)x^n = \frac{P(x)}{Q(x)},
  \]
  where $P$, $Q$ are polynomials; every root $\alpha$ of $Q$
  satisfies $\alpha^N = 1$; and $\deg P < \deg Q$.
  \label{prop:stanley}
\end{proposition}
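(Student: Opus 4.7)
The plan is to prove the proposition by partial fraction decomposition. Since every root $\alpha$ of $Q$ satisfies $\alpha^N = 1$, each root is a (nonzero) $N$-th root of unity, and I would factor
\[
Q(x) = c \prod_{i} (1-\zeta_i x)^{m_i},
\]
where the $\zeta_i$ are distinct $N$-th roots of unity and $m_i \ge 1$. The hypothesis $\deg P < \deg Q$ guarantees that the partial fraction expansion carries no polynomial part, so one obtains
\[
\frac{P(x)}{Q(x)} = \sum_{i} \sum_{k=1}^{m_i} \frac{c_{i,k}}{(1 - \zeta_i x)^k}
\]
for some constants $c_{i,k} \in \Complex$.

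Next I would expand each summand using the standard generalized-binomial identity $(1-\zeta x)^{-k} = \sum_{n \ge 0} \binom{n+k-1}{k-1} \zeta^n x^n$, producing the closed form
\[
f(n) = \sum_{i} \sum_{k=1}^{m_i} c_{i,k} \binom{n+k-1}{k-1} \zeta_i^n, \qquad n \ge 0.
\]
Finally, I would fix a residue $r \in \{0,1,\ldots,N-1\}$ and restrict to $n \equiv r \pmod{N}$. Since $\zeta_i^N = 1$, the factor $\zeta_i^n$ reduces to the constant $\zeta_i^r$, while $\binom{n+k-1}{k-1}$ is a polynomial in $n$ of degree $k-1$. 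Defining
\[
P_r(y) = \sum_{i} \sum_{k=1}^{m_i} c_{i,k} \zeta_i^r \binom{y+k-1}{k-1},
\]
one has $f(n) = P_r(n)$ for every $n$ in that residue class, establishing the quasipolynomial property with quasiperiod $N$ in Stanley's sense.

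The main point requiring care is that the identity $f(n) = P_r(n)$ must hold for \emph{all} $n \ge 0$, not only eventually, because Stanley's definition demands pointwise equality from the start (unlike Lison\v{e}k's weaker variant). This is what forces the hypothesis $\deg P < \deg Q$ into the plan: without it, polynomial division would leave a finite polynomial contribution that only affects the initial coefficients and could spoil the match at small~$n$. With the degree bound in place, the partial fraction expansion is an equality of formal power series term by term from $x^0$ onward, so the derived formula for $f(n)$ is exact for every $n \ge 0$. A minor cosmetic issue, not an obstacle, is that the $c_{i,k}$ and $\zeta_i$ are in general complex; when $f$ takes rational (or integer) values, the contributions from Galois-conjugate roots combine to yield real polynomials $P_r$, but this does not affect the proof.
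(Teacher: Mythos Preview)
Your argument is correct: partial fractions over the $N$-th roots of unity, followed by the binomial expansion of $(1-\zeta x)^{-k}$, is exactly the standard route, and you correctly identify why $\deg P < \deg Q$ is needed to make the equality hold from $n=0$ rather than only eventually.

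As for comparison: the paper does not give its own proof of this proposition at all---it is quoted as part of Proposition~4.4.1 from Stanley's \emph{Enumerative Combinatorics} and used as a black box in the proof of the next proposition. Your write-up is essentially Stanley's own argument, so there is nothing to contrast.
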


Next we re-state Lison\v{e}k's result using Stanley's definition and
with explicit mention of a quasiperiod.  If~$G$ is a permutation group
acting on $c$ boxes, we denote by $P_G(n)$ the number of distributions
of $n \ge 0$ balls in the $c$ boxes, when distributions related by an
element of $G$ are not distinguished.

\begin{proposition}[cf. Theorem 4.3.5~\cite{lisonek1994}]
  The function $n \mapsto P_G(n)$ is a quasipolynomial of quasiperiod
  $N=\lcm(1,2,\ldots,c)$.
  \label{prop:lisonek}
\end{proposition}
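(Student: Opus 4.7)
The plan is to read off the generating function of $P_G(n)$ from Eq.~\eqref{eq:functioncounting}, put it in the form required by Proposition~\ref{prop:stanley}, and identify the quasiperiod from the denominator. The generating function is already at hand: by Eq.~\eqref{eq:functioncounting},
\[
\sum_{n \ge 0} P_G(n)\, x^n \;=\; B(x) \;=\; Z_G\!\left(\tfrac{1}{1-x},\, \tfrac{1}{1-x^2},\, \ldots,\, \tfrac{1}{1-x^c}\right),
\]
which is a rational function in $x$. The goal is then to exhibit $B(x) = P(x)/Q(x)$ satisfying both conditions in Proposition~\ref{prop:stanley}.

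First I would expand $Z_G$ as a rational-coefficient combination of its cycle-index monomials $t_1^{m_1(g)} \cdots t_c^{m_c(g)}$, so that each term becomes $\prod_{j=1}^{c}(1-x^j)^{-m_j(g)}$ after substitution. Combining these over a common denominator gives $B(x) = P(x)/Q(x)$ where $Q(x)$ is a product of factors $(1-x^j)$ with $1 \le j \le c$. Every root of such a factor is a $j$th root of unity, and since $j \mid N$, every root $\alpha$ of $Q$ satisfies $\alpha^N = 1$; this is exactly the root condition in Proposition~\ref{prop:stanley}.

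Next I would verify the strict-degree condition $\deg P < \deg Q$. The cleanest route is an infinity check: each substituted monomial $\prod_j (1-x^j)^{-m_j(g)}$ tends to $0$ as $x \to \infty$, because $\sum_j m_j(g) \ge 1$ (any permutation on $c \ge 1$ points has at least one cycle). Hence $B(x) \to 0$ as $x \to \infty$, forcing $\deg P < \deg Q$. With both hypotheses in place, Proposition~\ref{prop:stanley} immediately delivers the conclusion that $P_G(n)$ is a quasipolynomial of quasiperiod $N$.

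I do not anticipate a serious obstacle here; the argument is essentially a bookkeeping exercise once Eq.~\eqref{eq:functioncounting} and Proposition~\ref{prop:stanley} are in hand. The one point that requires a touch of care is the degree comparison, for which the limit-at-infinity observation is much cleaner than attempting to compute $\deg P$ and $\deg Q$ directly from the cycle index. The appearance of $N = \lcm(1,\ldots,c)$, rather than a smaller integer, is explained by the fact that the denominators $(1-x^j)$ for all $j \in \{1,\ldots,c\}$ can occur, and $N$ is the least integer whose $N$th roots of unity absorb every such $j$th root of unity.
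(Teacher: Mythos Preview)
Your proposal is correct and follows essentially the same route as the paper: substitute $t_i = 1/(1-x^i)$ into $Z_G$, put the result over a common denominator built from factors $(1-x^j)$ with $1\le j\le c$, verify the root and degree hypotheses, and invoke Proposition~\ref{prop:stanley}. The only cosmetic difference is that the paper checks $\deg P < \deg Q$ by observing that each substituted monomial is $1/Q_g(x)$ with numerator degree~$0$ and denominator degree~$\ge 1$, and that this inequality survives passage to the common denominator and summation, whereas you obtain the same conclusion via the limit $B(x)\to 0$ as $x\to\infty$.
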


\begin{proof}
  The generating function of $P_G(n)$ is $B(x)$ as defined
  in~\eqref{eq:functioncounting}.  Now $Z_G$ is a polynomial in $c$
  indeterminates $t_1,\ldots,t_c$, and $B(x)$ is obtained from $Z_G$
  by the P\'olya substitutions $t_i = 1/(1-x^i)$ for $i=1,2,\ldots,c$.
  With this substitution, each cycle index monomial $z_g$ becomes a
  rational function $R_g(x) = 1/Q_g(x)$, where (1)~the denominator is
  a nonempty product of binomials of the form $(1-x^i)$, and (2)~the
  degree of the numerator is strictly smaller than the degree of the
  denominator.  Clearly both conditions are retained when all $R_g(x)$
  are expanded to have a common denominator; call it $Q(x)$.

  Taking the sum of the expanded rational functions and dividing by
  the constant $|G|$, we obtain $B(x)$ as a rational function
  $P(x)/Q(x)$ with $\deg P < \deg Q$, and $Q(x)$ consisting of a
  product of binomials of the form $(1-x^i)$, with $i \in
  \{1,2,\ldots,c\}$.

  Let $N = \lcm(1,2,\ldots,c)$.  Every root $\alpha$ of $Q$ is a root
  of $(1-x^i)$ for some $i \in \{1,2,\ldots,c\}$, thus $\alpha^i = 1$
  and also $\alpha^N = 1$.

  The conditions of Proposition~\ref{prop:stanley} are satisfied, so
  the claim follows.
\end{proof}

A similar result follows for $R(c,a)$, the number of rank-3 lattices,
seen as a function of $a$ with $c$ fixed.  But we have to be careful
with the initial terms of the sequence.  We say that two functions $f$
and $g$ \emph{agree from $n_0$} if $f(n)=g(n)$ for all $n \ge n_0$.
If $\ItGamma$ is a connection graph and $a\ge 0$, we write
$R(\ItGamma,a)$ for the number of rank-3 lattices with the connection
graph $\ItGamma$ and $a$ atoms in total.

\begin{proposition}
  For any fixed $c \ge 1$, the function $a \mapsto R(c,a)$ agrees from
  $\binom{c}{2}$ with a quasipolynomial in~$a$, of quasiperiod
  $N=\lcm(1,2,\ldots,c)$.
  \label{prop:quasi}
\end{proposition}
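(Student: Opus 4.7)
The plan is to decompose $R(c,a)$ as a finite sum over connection graphs and apply Proposition~\ref{prop:lisonek} to each summand.

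First, I would write
\[
R(c,a) \;=\; \sum_{\ItGamma} R(\ItGamma, a),
\]
where $\ItGamma$ ranges over the finite set of connection graphs with $c$ coatoms (finiteness was established in Section~\ref{sec:connection}). For each such $\ItGamma$, let $r$ be the number of connectors, $s$ the number of coatoms covering no connector, and $G = \Aut(\ItGamma)$ in its action on the $c$~coatoms. The discussion in Section~\ref{sec:balls} gives $R(\ItGamma, a) = P_G(a - r - s)$ whenever $a \ge r + s$, and $R(\ItGamma, a) = 0$ otherwise.

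Next I would invoke Proposition~\ref{prop:lisonek}: each $P_G$ is a quasipolynomial of quasiperiod $N = \lcm(1,2,\ldots,c)$. Shifting the argument by a constant integer preserves the quasipolynomial property with the same quasiperiod, since residue classes modulo $N$ are merely relabeled. Writing $\widetilde P_G$ for the extension of $P_G$ to all integers via its quasipolynomial formulas, the finite sum
\[
Q(a) \;=\; \sum_{\ItGamma} \widetilde P_G(a - r - s)
\]
is a quasipolynomial in $a$ of quasiperiod $N$.

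It then remains to show $R(c,a) = Q(a)$ for all $a \ge \binom{c}{2}$. Since $R(\ItGamma, a) = \widetilde P_G(a-r-s)$ whenever $a \ge r+s$, it suffices to establish the combinatorial bound $r + s \le \binom{c}{2}$ for every connection graph. I would argue that connectors can only involve the $c - s$ non-loner coatoms and that any pair of coatoms shares at most one connector, so $r \le \binom{c-s}{2}$. A direct calculation then yields
\[
\binom{c-s}{2} + s \;=\; \binom{c}{2} - \frac{s(2c - s - 3)}{2},
\]
which is at most $\binom{c}{2}$ for $0 \le s \le 2c - 3$; this range contains every admissible $s \le c$ when $c \ge 3$, and the cases $c \le 2$ can be checked directly from Theorems~\ref{thm:c2} and~\ref{thm:c3} together with the trivial identity $R(1,a)=1$. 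The main obstacle is this combinatorial inequality; everything else is routine closure of quasipolynomials of common quasiperiod under integer shifts and finite summation.
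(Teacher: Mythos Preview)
Your proof is correct and follows essentially the same route as the paper's: decompose $R(c,a)$ as a finite sum over connection graphs, apply Proposition~\ref{prop:lisonek} to each summand, and invoke the bound $r+s\le\binom{c}{2}$ to control where agreement with the quasipolynomial begins. You are in fact more careful than the paper on that last point---the paper simply declares $r+s\le n_0$ to be ``clear,'' whereas you justify it via $r\le\binom{c-s}{2}$ and an explicit computation, and you correctly isolate the cases $c\le 2$ where the inequality can fail and must be handled directly.
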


\begin{proof}
  Let $N = \lcm(1,2,\ldots,c)$ and $n_0 = \binom{c}{2}$.

  Let $\ItGamma$ be any connection graph of $c$ coatoms, and let it
  have $r$ connectors and $s$ coatoms lacking a connector.  It is
  clear that $r+s \le n_0$.  For $a < r+s$, we have $R(\ItGamma,a) =
  0$, and for $a \ge r+s$ we have $R(\ItGamma,a) = P_G(a-r-s)$.
  Clearly $R(\ItGamma,a)$ agrees with a quasipolynomial of quasiperiod
  $N$ from $r+s$, thus also from $n_0 \ge r+s$.

  Let then $\mathcal{G}$ be the set of all connection graphs of
  $c$~coatoms.  Then
  \[
  R(c,a) = \sum_{\ItGamma \in \mathcal{G}} R(\ItGamma,a).
  \]
  Since every $R(\ItGamma,a)$ agrees with a quasipolynomial of
  quasiperiod $N$ from $n_0$, so does their sum, as the family of
  quasipolynomials is closed under finite addition.
\end{proof}

This (like Lison\v{e}k's Theorem 4.3.5) is an existence result, and
does not tell us what the quasipolynomial is.  There are many ways how
one may seek the exact form.  One could inspect the generating
function itself, perhaps prove some recurrence relations, and so on.
Here we take the low road: for each residue class $k$, if we know that
the constituent polynomial $P_k$ has degree at most~$d$, fit a
polynomial of degree~$d$ to at least $d+1$ known values.  If done in
rational (not floating-point) arithmetic, this identifies the
polynomial coefficients exactly.  But first we need to have an upper
bound on~$d$, so that we know how many points we need.  Fortunately we
can do this by an elementary argument.

  \begin{samepage}
\begin{proposition}
  For any $c \ge 1$, there is a constant $K$ such that $R(c,a) \le
  Ka^{c-1}$ for all $a \ge 1$.
  \label{prop:degree}
\end{proposition}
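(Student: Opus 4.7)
The plan is to bound $R(c,a)$ crudely by ignoring symmetries among the coatoms and using the stars-and-bars count. First I would recall from Section~\ref{sec:connection} the two-phase decomposition: every rank-3 lattice with $c$~coatoms and $a$~atoms is specified by (i)~a connection graph $\ItGamma$ on the $c$~coatoms, and (ii)~a distribution of the remaining $a-r$ loners among the $c$ coatoms (where $r$ is the number of connectors in $\ItGamma$). As was observed in Section~\ref{sec:connection}, the number of connectors satisfies $r \le \binom{c}{2}$, and consequently the collection~$\mathcal{G}$ of connection graphs of $c$~coatoms is finite; let $M_c = |\mathcal{G}|$.

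Next I would bound the contribution from each $\ItGamma$. Even forgetting the automorphisms of~$\ItGamma$ (which only reduces the count), the number of distributions of $n$ identical loners into the $c$ (distinguished) coatoms is at most the stars-and-bars count $\binom{n+c-1}{c-1}$. Hence
\[
  R(\ItGamma,a) \le \binom{(a-r)+c-1}{c-1} \le \binom{a+c-1}{c-1}.
\]
Summing over $\ItGamma \in \mathcal{G}$ gives
\[
  R(c,a) \le M_c \binom{a+c-1}{c-1}.
\]

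Finally I would observe that $\binom{a+c-1}{c-1}$ is a polynomial in~$a$ of degree $c-1$ with positive leading coefficient, so for $a \ge 1$ it is bounded by some constant multiple of $a^{c-1}$; more concretely, $\binom{a+c-1}{c-1} \le (a+c-1)^{c-1}/(c-1)! \le c^{c-1} a^{c-1}/(c-1)!$ for $a \ge 1$. Taking $K = M_c \cdot c^{c-1}/(c-1)!$ finishes the proof.

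There is no real obstacle here; the only mild point is that one must remember to include the loners-without-a-connector in the count, but since stars-and-bars allows empty boxes and does not require us to subtract anything for loners already placed (we bound the unrestricted distribution, which dominates), this causes no trouble. The bound $K$ obtained in this way is far from tight, but that is acceptable for the purposes of Proposition~\ref{prop:degree}, which is used only as an a priori degree bound for the polynomial fits in Section~\ref{sec:functional}.
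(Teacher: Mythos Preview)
Your proof is correct and follows essentially the same strategy as the paper: bound each $R(\ItGamma,a)$ by the distinguished-boxes count (ignoring automorphisms, which can only lower the count) and sum over the finite set~$\mathcal{G}$ of connection graphs. The only cosmetic difference is that the paper bounds the distinguished-box count directly by $a^{c-1}$ (using that at least one atom is already committed, so $n\le a-1$, and the first $c-1$ box contents determine the last), whereas you go via stars-and-bars $\binom{a+c-1}{c-1}$ and then estimate that binomial; both routes yield a constant times $a^{c-1}$, which is all that is needed.
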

  \end{samepage}

\begin{proof}
  Let $\ItGamma$ be any connection graph with $c$ coatoms.  Suppose
  first that $\Aut(\ItGamma)$ fixes all coatoms, that is, we have
  $c$~distinguished boxes.  At least one atom is used by the
  connection graph, so the number of balls to distribute is $n \le
  a-1$.  There are at most $a^{c-1}$ ways to put $a-1$ balls to $c$
  distinguished boxes, since the numbers of balls in the first $c-1$
  boxes are a $(c-1)$-tuple of integers between $0$ and $a-1$.  Then
  observe that if $\Aut(\ItGamma)$ does not fix all coatoms, this can
  only decrease the number of distributions.  So for all connection
  graphs we have $R(\ItGamma,a) \le a^{c-1}$.  For a fixed value of
  $c$, there is a finite collection $\mathcal{G}$ of connection
  graphs, so $R(c,a) \le |\mathcal{G}| \; a^{c-1}$.  Take $K =
  |\mathcal{G}|$.
\end{proof}

Now, by Propositions~\ref{prop:quasi} and~\ref{prop:degree}, we know
the following:
\begin{itemize}
\item $R(4,a)$ agrees with a quasipolynomial with $N=\lcm(1,2,3,4)=12$
  and degree at most~$3$, from $n_0=\binom{4}{2}=6$.  For the fit we
  need $12 \times 4 = 48$ known values.
\item $R(5,a)$ agrees with a quasipolynomial with
  $N=\lcm(1,\ldots,5)=60$ and degree at most~$4$, from
  $n_0=\binom{5}{2}=10$.  For the fit we need $60 \times 5 = 300$
  known values.
\item $R(6,a)$ agrees with a quasipolynomial with
  $N=\lcm(1,\ldots,6)=60$ and degree at most~$5$, from
  $n_0=\binom{6}{2}=15$.  For the fit we need $60 \times 6 = 360$
  known values.
\item $R(7,a)$ agrees with a quasipolynomial with
  $N=\lcm(1,\ldots,7)=420$ and degree at most~$6$, from
  $n_0=\binom{7}{2}=21$.  For the fit we need $420 \times 7 = 2940$
  known values.
\end{itemize}

The values of $n_0$ are as guaranteed by Proposition~\ref{prop:quasi}.
Once a quasipolynomial is available, one can check if it happens to
agree with some of the initial terms, and extend its range
accordingly.

As we already computed the values up to $a=1000$ before, we have
enough values to fit the polynomials up to $c=6$.  For $c=7$ we reran
Algorithm~\ref{alg:main} with $a_\text{max}=3000$, which took about
$1100$ seconds.

From the polynomial fits we obtain the following quasipolynomials.
Leading terms that are common to all residue classes are collected
together.  For the terms that depend on residue class, we use this
shorthand notation: the quantity $[c_0,c_1,\ldots,c_{M-1}]$ takes the
value $c_k$ when $a \equiv k \pmod M$.  For example, $[3,-3]$ means
$3$ if $a$ is even, and $-3$ if $a$ is odd.  Note that $M$ may be
smaller than~$N$.

\begin{theorem}
  For any $a\ge 0$, we have
  \begin{multline*}
    R(4,a) = (97/144)a^3 - (5/6)a^2 
    + [44/48,47/48] \, a \\
    + [0,13,8,-45,40,-19,0,-5,8,-27,40,-37] \; / \; 72.
  \end{multline*}
  \label{thm:r4}
\end{theorem}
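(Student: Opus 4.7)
The plan is to apply Propositions \ref{prop:quasi} and \ref{prop:degree} directly to the case $c=4$, reducing the claim to a finite computation that identifies the quasipolynomial coefficients exactly. By Proposition \ref{prop:quasi}, the function $a \mapsto R(4,a)$ agrees from $n_0 = \binom{4}{2} = 6$ with a quasipolynomial of quasiperiod $N = \lcm(1,2,3,4) = 12$, so there exist polynomials $P_0, P_1, \ldots, P_{11}$ with $R(4,a) = P_k(a)$ whenever $a \equiv k \pmod{12}$ and $a \ge 6$. By Proposition \ref{prop:degree}, each $P_k$ has degree at most $c - 1 = 3$.

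First I would partition the known values $R(4,a)$ for $6 \le a \le 1000$ (produced by Algorithm \ref{alg:main}) into the twelve residue classes modulo $12$. For each class $k \in \{0,1,\ldots,11\}$ I would pick four values $a_1 < a_2 < a_3 < a_4$ with $a_i \equiv k \pmod{12}$ and $a_i \ge 6$, and solve the resulting $4 \times 4$ linear system over $\mathbb{Q}$ to recover the unique polynomial $P_k$ of degree at most $3$ that interpolates them. Since we are working with exact rational arithmetic, this identifies each $P_k$ without any roundoff. Collecting the twelve polynomials and separating out the parts of the coefficient expansions that are the same across residue classes produces the stated formula: the cubic and quadratic coefficients $97/144$ and $-5/6$ are common to all classes, the linear coefficient takes only two distinct values according to the parity of $a$, and the constant term has genuine period $12$.

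The verification then has two parts. The first is an internal check: confirm that $P_k(a)$ matches $R(4,a)$ for \emph{all} the computed values with $a \ge 6$ in the given residue class, not merely the four used for interpolation. Any discrepancy would signal that the degree bound or the quasiperiod had been underestimated, but Proposition \ref{prop:degree} guarantees the bound $3$ and Proposition \ref{prop:quasi} guarantees quasiperiod dividing $12$, so the check should pass cleanly. The second is to extend the range downwards: the propositions only guarantee agreement from $a = 6$, so I would evaluate the candidate quasipolynomial at $a = 0, 1, 2, 3, 4, 5$ and compare with the corresponding $R(4,a)$ obtained directly (for example, $R(4,1) = R(4,2) = 1$, $R(4,3) = 2$, and so on via \eqref{eq:oneliner} or case inspection). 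The theorem asserts agreement already from $a = 0$, so this is the step where the stated formula is slightly stronger than the general proposition and must be checked by hand.

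The main obstacle is conceptual rather than technical: the quasipolynomial furnished by Proposition \ref{prop:quasi} is only guaranteed to describe $R(4,a)$ from $a = 6$ onward, and so the claim ``for any $a \ge 0$'' depends on the fortunate coincidence that the same formula continues to hold for the six initial terms. This is not automatic, and in principle could fail: the small values $R(4,0),\ldots,R(4,5)$ must be computed independently and shown to agree with the fitted formula. Once that finite verification succeeds, the theorem follows. All remaining work is routine exact arithmetic on a known finite dataset.
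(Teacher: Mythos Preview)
Your approach is essentially identical to the paper's: invoke Propositions~\ref{prop:quasi} and~\ref{prop:degree} to conclude that $R(4,a)$ agrees from $a=6$ with a quasipolynomial of quasiperiod $12$ and degree at most $3$, fit each residue class exactly over $\mathbb{Q}$ using four data points, verify against the full computed range, and then check the six initial terms $a=0,\ldots,5$ separately to extend the validity down to $a\ge 0$. One small slip to correct: your illustrative values $R(4,2)=1$ and $R(4,3)=2$ are wrong --- from Table~\ref{table:values37} the actual values are $R(4,2)=4$ and $R(4,3)=13$ --- but this does not affect the argument.
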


\begin{theorem}
  For any $a\ge 3$, we have
  \begin{multline*}
    R(5,a) = \; (175/192)a^4 - (3079/480)a^3 + (11771/480)a^2 \\
    + [-7268/160, -7273/160] \, a \; + h(a),
  \end{multline*}
  where
  \begin{multline*}
    h(a) = [
      33600, 34019, 34072, 33627, 33152, 34915, 33624, 33947, 33472, 33507,\\
      34520, 34459, 32832, 33827, 34072, 34395, 33344, 34147, 33432, 33947,\\
      34240, 33699, 33752, 34267, 32832, 34595, 34264, 33627, 33152, 34147,\\
      34200, 34139, 33472, 33507, 33752, 35035, 33024, 33827, 34072, 33627,\\
      33920, 34339, 33432, 33947, 33472, 34275, 33944, 34267, 32832, 33827,\\
      34840, 33819, 33152, 34147, 33432, 34715, 33664, 33507, 33752, 34267 ] \; / \; 960.
  \end{multline*}
  \label{thm:r5}
\end{theorem}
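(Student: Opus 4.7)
The plan is to invoke Propositions~\ref{prop:quasi} and~\ref{prop:degree} as the structural backbone. The former guarantees that $R(5,a)$ agrees, from $a_0 = \binom{5}{2} = 10$ onwards, with a quasipolynomial $Q(a)$ of quasiperiod $N = \lcm(1,2,3,4,5) = 60$; the latter bounds each constituent polynomial $P_k$ (for $0 \le k < 60$) by degree $c-1 = 4$. Hence $Q$ is pinned down by a mere $60 \times 5 = 300$ rational coefficients.

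First I would reuse the numerical values of $R(5,a)$ produced by Algorithm~\ref{alg:main} for $a \le 1000$, all computed in exact rational arithmetic. For each residue class $k \in \{0,1,\ldots,59\}$, I would extract five terms from the subsequence $\{R(5,a) : a \ge 10,\ a \equiv k \pmod{60}\}$ and recover $P_k$ by solving the associated $5 \times 5$ Vandermonde system exactly. Assembling the sixty $P_k$ gives $Q$ on the nose. Once in hand, I would simplify by splitting off the terms that turn out to be independent of the residue class (the $a^4$, $a^3$, $a^2$ coefficients here), then the terms that depend only on parity (the $a^1$ coefficient), leaving only the constant term with its full period-$60$ dependence, which is the function $h(a)$ as tabulated.

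Second, a verification step: with the explicit quasipolynomial in hand, I would evaluate $Q(a)$ at all remaining computed values of $a$ up to $1000$ and check agreement with $R(5,a)$. These hundreds of extra checks are strictly overdetermined and are not logically required, but they guard against arithmetic mistakes in the fit. Finally, since Proposition~\ref{prop:quasi} only guarantees the quasipolynomial identity from $a \ge 10$ but the theorem claims validity from $a \ge 3$, I would directly verify the formula at the seven small values $a = 3, 4, 5, 6, 7, 8, 9$ by comparing $Q(a)$ against the corresponding independently computed $R(5,a)$; if they agree, the stated range extends down to $3$.

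The main obstacle is not conceptual but computational bookkeeping: there is nothing subtle to prove beyond what the two propositions already supply, but one must correctly manage sixty simultaneous polynomial fits in exact arithmetic and confirm the lowered threshold $a \ge 3$ rather than the $a \ge 10$ that the general existence result provides. Everything else reduces to finite, verifiable rational computations.
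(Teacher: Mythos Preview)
Your proposal is correct and mirrors the paper's own argument: invoke Propositions~\ref{prop:quasi} and~\ref{prop:degree} to establish existence of a degree-$4$ quasipolynomial of quasiperiod~$60$ valid from $a\ge 10$, determine its $300$ coefficients by exact polynomial fits on residue classes using the values from Algorithm~\ref{alg:main}, verify against the remaining computed values, and then check directly that the formula extends down to $a\ge 3$. There is no substantive difference in method or in the handling of the lowered threshold.
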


It was indeed necessary to be careful with initial terms.  The
quasipolynomial in Theorem~\ref{thm:r5} does \emph{not} agree with
$R(5,a)$ at $a=0,1,2$.  At those points, the quasipolynomial yields
$35,9,6$, while the true values of $R(5,a)$ are $0,1,5$.

There does not seem to be much structure in the ``quasiconstant''
$h(a)$ term of $R(5,a)$.  It has the full period $60$.  Perhaps this
was to be expected: from Table~\ref{table:computation} we recall that
$R(5,a)$ ensues as a sum over $72$ graphs having $11$ different cycle
indices.  If the graphs have residue-dependent contributions with
different periods, their joint effect will easily end up with the full
period.  If desired, one may try to manipulate $h(a)$ into a form that
is more pleasant to human eyes.
  
It would have been quite tedious to derive $R(5,a)$ manually, even if
to find just the leading terms.  By the help of computations we can
readily see the three leading terms, which do not depend on residue
class, and which provide a fairly precise picture of the growth rate
of $R(5,a)$.  And if we want the fine details, they are there.  With
the explicit formula one can easily calculate, say,
$R(5,1000000) = 911451918774522871241702$.

For reasons of clarity, for $c=6$ and $c=7$ we show here just the
leading terms that are common to all residue classes, and hide the
lower order terms behind an $O$~notation.  The full explicit
quasipolynomials are available in Bitbucket~\cite{bitbucket}.

\begin{theorem}
  \[
  R(6,a) = (185521/86400)a^5 - (266581/6912)a^4 + (4268287/12960)a^3 + O(a^2).
  \]
  \label{thm:r6}
\end{theorem}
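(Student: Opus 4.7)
The plan is to apply the polynomial-fitting procedure outlined at the end of Section~\ref{sec:functional}, using the numerical data that Algorithm~\ref{alg:main} already provides, and then extract the leading terms.

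First, by Propositions~\ref{prop:quasi} and~\ref{prop:degree}, the function $a \mapsto R(6,a)$ agrees, from $n_0 = \binom{6}{2} = 15$ onwards, with a quasipolynomial in~$a$ of quasiperiod $N = \lcm(1,2,3,4,5,6) = 60$ and degree at most $c-1 = 5$. Thus for each residue class $k \in \{0, 1, \ldots, 59\}$ there is a polynomial $P_k$ of degree at most $5$, and to pin $P_k$ down uniquely it suffices to know its value at $6$ arguments in that residue class, all at least~$15$. Taking $6$ points per class, we need $60 \times 6 = 360$ values of $R(6,a)$ at arguments $\ge 15$. Since the numerical table supplied by Algorithm~\ref{alg:main} already contains $R(6,a)$ for $a$ up to $1000$, we have far more data than we need.

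Second, I would partition the values $R(6, a)$ for $15 \le a \le 1000$ according to $a \bmod 60$, and for each residue class $k$ fit a polynomial $P_k$ of degree $5$ through the data, using exact rational arithmetic (for instance Lagrange interpolation or solving the $6 \times 6$ Vandermonde system in $\mathbb{Q}$). As a consistency check, the fitted polynomial in each class should reproduce every further data point in that class, not just the ones used in the fit; any discrepancy would signal a computational bug. This step is purely mechanical and can be scripted in GAP alongside Algorithm~\ref{alg:main}.

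Third, I would write each $P_k$ in the monomial basis and compare coefficients across residue classes. Whichever leading coefficients coincide across all $60$ classes are the ``residue-independent'' leading terms; the remaining lower-order coefficients depend on $k$ and are absorbed into the shorthand $[c_0, \ldots, c_{59}]$ bracket notation used in Theorems~\ref{thm:r4} and~\ref{thm:r5}. For the statement of Theorem~\ref{thm:r6}, it suffices to read off the top three coefficients, which (as reported) turn out to be $185521/86400$, $-266581/6912$, $4268287/12960$, and to collapse everything from degree $2$ downwards into an $O(a^2)$ term.

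The hard part here is not mathematical but bookkeeping: ensuring the $360$ fitted polynomial coefficients agree across classes where they must, and keeping the rational arithmetic exact so that the leading coefficients come out as clean fractions rather than floating-point approximations. The framework of Section~\ref{sec:functional} already guarantees existence of the quasipolynomial, and Proposition~\ref{prop:degree} bounds the degree, so there is no further structural result to establish; the proof reduces to running the fit and reporting the top three coefficients.
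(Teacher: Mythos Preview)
Your proposal is correct and follows essentially the same route as the paper: invoke Propositions~\ref{prop:quasi} and~\ref{prop:degree} to guarantee a quasipolynomial of quasiperiod $60$ and degree at most~$5$ valid from $a=15$, then use the $1000$ values already produced by Algorithm~\ref{alg:main} to fit the $60$ constituent polynomials in exact rational arithmetic and read off the common leading coefficients. The paper adds only the same redundancy check you describe, namely verifying the fitted quasipolynomial against all available values.
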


\begin{theorem}
  \begin{multline*}
  R(7,a) = (35406319/3628800)a^6 - (205303771/604800)a^5 \\
  +(986460817/181440)a^4 - (908874965/18144)a^3 + O(a^2).
  \end{multline*}
  \label{thm:r7}
\end{theorem}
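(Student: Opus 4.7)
The plan is to follow the same computational template used for Theorems~\ref{thm:r4} and~\ref{thm:r5}, but now for $c=7$, extracting only the four leading terms (which are common to every residue class) and absorbing everything else into $O(a^2)$. The first step is to invoke Proposition~\ref{prop:quasi} with $c=7$ to conclude that $R(7,a)$ agrees, from $a\ge n_0 = \binom{7}{2} = 21$, with a quasipolynomial $Q(a)$ of quasiperiod $N = \lcm(1,2,\ldots,7) = 420$; simultaneously Proposition~\ref{prop:degree} bounds the degree of each constituent polynomial by $c-1 = 6$. Thus on each residue class mod $420$, the function is determined by its values at any $7$ distinct points in that class.

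To determine $Q$ uniquely it therefore suffices to know $R(7,a)$ at $420 \times 7 = 2940$ consecutive values of $a$ with $a \ge 21$. The second step is to invoke Algorithm~\ref{alg:main} with $c = 7$ and $a_\text{max} = 3000$, which (as noted by the author after Proposition~\ref{prop:degree}) produces these values. On each residue class $k \in \{0,1,\ldots,419\}$ we then perform a Lagrange interpolation in exact rational arithmetic through seven consecutive data points with $a \equiv k \pmod{420}$ and $a \ge 21$; this pins down the constituent polynomial $P_k$ of degree at most $6$ exactly, with no rounding error.

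The third step is to read off, from the $420$ polynomials $P_k$, those coefficients (of $a^6$, $a^5$, $a^4$, $a^3$) that turn out to be identical across all residue classes; these yield the four displayed leading terms of Theorem~\ref{thm:r7}. The remaining coefficients (of $a^2$, $a^1$, $a^0$) do depend on $k$ in general and are hidden in the $O(a^2)$ error term. As a sanity check one verifies the resulting quasipolynomial against additional computed values of $R(7,a)$ beyond those used in the fit (for instance, comparing against the full listings in Bitbucket~\cite{bitbucket}).

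The main obstacle is not mathematical but computational: one needs the $2940$ exact values of $R(7,a)$, which in turn requires executing Algorithm~\ref{alg:main} over the $10808$ connection graphs of seven coatoms with $a_\text{max} = 3000$, and the interpolation must be done in exact arithmetic over $420$ separate residue classes so that the leading coefficients come out as exact rationals. Once that data pipeline is in place, the algebraic verification that the four top coefficients coincide across all residue classes is routine, and the remaining tail is packaged into the $O(a^2)$ term without further analysis.
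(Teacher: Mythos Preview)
Your proposal is correct and follows essentially the same approach as the paper: invoke Propositions~\ref{prop:quasi} and~\ref{prop:degree} to get a quasipolynomial of quasiperiod $420$ and degree at most $6$ valid from $a\ge 21$, run Algorithm~\ref{alg:main} with $a_\text{max}=3000$ to obtain enough exact values, fit the constituent polynomials in rational arithmetic on each residue class, and display only the four leading coefficients that turn out to be common to all classes. The paper also performs the same redundancy check against the directly computed values.
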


As a redundancy check, for all available values of $a$, we compared
the quantities evaluated from the quasipolynomials against the values
directly obtained from Algorithm~\ref{alg:main}, and observed that
they agree.

Finally, let us see what the computations have to say about simple
cases.  By the same polynomial-fit method as above, for all $a \ge 0$
we obtain
\begin{align*}
  R(2,a) &= a, \\
  R(3,a) &= (3/4)a^2+ (1/3)a +  [ 0, -1, -8, 3, -4, -5 ] \; / \;12,
\end{align*}
giving an alternative derivation of our first basic results in
Section~\ref{sec:three}.

\section{Closing remarks}
\label{sec:closing}

The computations in this work were facilitated by the availability of
several tools.  For the first phase, where the connection graphs were
generated, a tool of isomorph-free graph generation (\texttt{genbgL}
from the nauty package) was essential.  The second phase required the
mathematical machinery of counting theory; but for actually computing
the automorphism groups and cycle indices of millions of graphs, it
was convenient that those tools were available in GAP and Digraphs.
Easy-to-use arithmetic on large integers, rationals, and polynomials
was also helpful.

The method of converting the generating functions to functional forms
by computing initial terms and then doing a polynomial fit has a
certain ``snake oil'' appeal.  It is easy to do, provided that one has
a suitable existence result that the quasipolynomial actually is
there to be found.  However, it may require rather long sequences to
be computed.  Perhaps a computational method that inspects the
structure of the generating function directly would be more efficient
here.

Here we considered only one kind of lattices, the rank-3 lattices.
Some of the methods used here may be applicable to other low-rank
graded lattices as well.  A natural next question would be that of
rank-4 lattices.  What is $R(c,m,a)$, the number of graded lattices of
$c$~coatoms, $m$~elements in the middle level, and $a$~atoms?  Can one
simply ``glue'' two rank-3 lattices on top of each other?  Perhaps,
but with two considerations: that of isomorphism, and that of ensuring
that the glued results are indeed lattices.

Another thing for further study would be to implement an
\emph{enumerator}, that is, a computational function that creates a
rank-3 lattice on demand, when given an indexing integer $i$ such that
$1 \le i \le R(c,a)$.


\bibliographystyle{plain}
\bibliography{refs}

\begin{thebibliography}{10}

\bibitem{allenby2010}
R.~B. J.~T. Allenby and Alan Slomson.
\newblock {\em How to Count: An Introduction to Combinatorics}.
\newblock Chapman and Hall/CRC, 2nd edition, 2010.

\bibitem{beekman2017}
Richard~M. Beekman.
\newblock {\em An Introduction to Number Theoretic Combinatorics}.
\newblock Lulu Press, 2017.

\bibitem{digraphs}
J.~De Beule, J.~Jonusas, J.~D. Mitchell, M.~Torpey, and W.~A. Wilson.
\newblock {\em Digraphs -- a {GAP} package, version 0.11.0}, 2017.

\bibitem{cameron2017}
Peter~J. Cameron.
\newblock {\em Notes on Counting: An Introduction to Enumerative
  Combinatorics}.
\newblock Cambridge University Press, 2017.

\bibitem{cameron2018}
Peter~J. Cameron.
\newblock A counting problem.
\newblock
  \url{https://cameroncounts.wordpress.com/2018/03/16/a-counting-problem/},
  March 2018.
\newblock Blog post.

\bibitem{erne2002}
Marcel Ern{\'e}, Jobst Heitzig, and J{\"u}rgen Reinhold.
\newblock On the number of distributive lattices.
\newblock {\em Electron. J. Comb.}, 9, 2002.
\newblock Article \#R24.

\bibitem{gap}
The GAP~Group.
\newblock {\em {GAP -- Groups, Algorithms, and Programming, version 4.8.10}},
  2018.

\bibitem{gebhardt2016}
Volker Gebhardt and Stephen Tawn.
\newblock Constructing unlabelled lattices.
\newblock {\em ArXiv e-prints}, September 2016.

\bibitem{harary1973}
Frank Harary and Edgar~M. Palmer.
\newblock {\em Graphical Enumeration}.
\newblock Academic Press, 1973.

\bibitem{heitzig2002}
Jobst Heitzig and J{\"u}rgen Reinhold.
\newblock Counting finite lattices.
\newblock {\em Algebr. univ.}, 48(1):43--53, 2002.

\bibitem{oeis}
OEIS~Foundation Inc.
\newblock The {O}n-{L}ine {E}ncyclopedia of {I}nteger {S}equences.
\newblock \url{https://oeis.org}, 2018.

\bibitem{jipsen2015}
Peter Jipsen and Nathan Lawless.
\newblock Generating all finite modular lattices of a given size.
\newblock {\em Algebr. univ.}, 74(3):253--264, 2015.

\bibitem{kleitman1980}
D.~J. Kleitman and K.~J. Winston.
\newblock The asymptotic number of lattices.
\newblock {\em Ann. Discrete Math.}, 6:243--249, 1980.

\bibitem{klotz1971}
Walter Klotz and Lutz Lucht.
\newblock Endliche {V}erb{\"a}nde.
\newblock {\em J. Reine Angew. Math}, 247:58--68, 1971.

\bibitem{bitbucket}
Jukka Kohonen.
\newblock Counting rank-3 lattices.
\newblock \url{https://bitbucket.org/jkohonen/count-rank3-lattices}, 2018.

\bibitem{kyuno1979}
Shoji Kyuno.
\newblock An inductive algorithm to construct finite lattices.
\newblock {\em Math. Comput.}, 145:409--421, 1979.

\bibitem{lisonek1994}
Petr Lison{\v{e}}k.
\newblock {\em Computer-assisted Studies in Algebraic Combinatorics}.
\newblock PhD thesis, Johannes Kepler Universit{\"a}t, Linz, September 1994.
\newblock Available at
  \url{http://www.lacim.uqam.ca/~plouffe/articles/thesis.pdf}.

\bibitem{nauty}
Brendan~D. McKay and Adolfo Piperno.
\newblock Nauty and {T}races {U}ser's {G}uide ({V}ersion 2.6).
\newblock \url{http://pallini.di.uniroma1.it/Guide.html}, 2016.

\bibitem{xz}
The~Tukaani Project.
\newblock {XZ} {U}tils.
\newblock \url{https://tukaani.org/xz/}, 2018.

\bibitem{stanley2012}
Richard~P. Stanley.
\newblock {\em Enumerative Combinatorics}, volume~1.
\newblock Cambridge University Press, 2nd edition, 2012.

\end{thebibliography}


\begin{table}[p]
  \caption{Some values of $R(c,a)$ (see \protect\cite{bitbucket} for full results).
    \label{table:values37}}
{\begin{tabular}{rrrrrr}
\toprule    
 $a$ & $R(3,a)$ & $R(4,a)$ & $R(5,a)$ & $R(6,a)$ & $R(7,a)$\\
\midrule
   1  & 1 & 1 & 1 & 1 & 1  \\
   2  & 3 & 4 & 5 & 6 & 7  \\
   3  & 8 & 13 & 20 & 29 & 39  \\
   4  & 13 & 34 & 68 & 121 & 197  \\
   5  & 20 & 68 & 190 & 441 & 907  \\
   6  & 29 & 121 & 441 & 1384 & 3736  \\
   7  & 39 & 197 & 907 & 3736 & 13530  \\
   8  & 50 & 299 & 1690 & 8934 & 42931  \\
   9  & 64 & 432 & 2916 & 19298 & 120892  \\
  10  & 78 & 600 & 4734 & 38268 & 306120  \\
  11  & 94 & 806 & 7310 & 70685 & 706642  \\
  12  & 112 & 1055 & 10836 & 123057 & 1506016  \\
  13  & 131 & 1352 & 15528 & 203764 & 2996398  \\
  14  & 151 & 1698 & 21619 & 323383 & 5618515  \\
  15  & 174 & 2100 & 29365 & 494925 & 10008899  \\
  16  & 197 & 2561 & 39045 & 734034 & 17053898  \\
  17  & 222 & 3085 & 50961 & 1059330 & 27950691  \\
  18  & 249 & 3675 & 65434 & 1492653 & 44275741  \\
  19  & 277 & 4338 & 82809 & 2059229 & 68059684  \\
  20  & 306 & 5074 & 103453 & 2788044 & 101869637  \\
  21  & 338 & 5891 & 127751 & 3712081 & 148898469  \\
  22  & 370 & 6790 & 156117 & 4868468 & 213061109  \\
  23  & 404 & 7777 & 188980 & 6298878 & 299097442  \\
  24  & 440 & 8854 & 226794 & 8049751 & 412683316  \\
  25  & 477 & 10029 & 270037 & 10172443 & 560547117  \\
  26  & 515 & 11300 & 319204 & 12723627 & 750594650  \\
  27  & 556 & 12677 & 374813 & 15765529 & 992040210  \\
  28  & 597 & 14160 & 437409 & 19366035 & 1295545409  \\
  29  & 640 & 15756 & 507553 & 23599151 & 1673363704  \\
  30  & 685 & 17465 & 585831 & 28545198 & 2139494240  \\
 \ldots \\
 100  & 7533 & 665370 & 84971972 & 17929736129 & 6858729229937  \\
 200  & 30066 & 5355739 & 1407988534 & 627979574932 & 524132826147936  \\
 300  & 67600 & 18112775 & 7211812220 & 4914131994972 & 6330705903535897  \\
 400  & 120133 & 42978145 & 22926705532 & 21021167741959 & 36624782962133435  \\
 500  & 187666 & 83993514 & 56170430969 & 64731346381612 & 142179199873933941  \\
 600  & 270200 & 145200550 & 116748251030 & 162041086855752 & 429521796157985802  \\
 700  & 367733 & 230640920 & 216652928217 & 351737648034289 & 1092140851049830127  \\
 800  & 480266 & 344356289 & 370064725029 & 687975809274792 & 2448715582864593496  \\
 900  & 607800 & 490388325 & 593351403965 & 1242854550978032 & 4988371711653746757  \\
1000  & 750333 & 672778695 & 905068227527 & 2108993735138119 & 9422962085155489652  \\
\bottomrule
\end{tabular}}
\end{table}

\begin{table}[p]
  \caption{Some values of $R(c,a)$ (see \protect\cite{bitbucket} for full results).
    \label{table:values89}}
{\begin{tabular}{rrr}
\toprule
  $a$ & $R(8,a)$ & $R(9,a)$\\
\midrule
   1  & 1 & 1  \\
   2  & 8 & 9  \\
   3  & 50 & 64  \\
   4  & 299 & 432  \\
   5  & 1690 & 2916  \\
   6  & 8934 & 19298  \\
   7  & 42931 & 120892  \\
   8  & 183303 & 690896  \\
   9  & 690896 & 3517049  \\
  10  & 2310366 & 15818049  \\
  11  & 6920971 & 63028260  \\
  12  & 18783412 & 224257964  \\
  13  & 46705657 & 719521493  \\
  14  & 107510169 & 2102741467  \\
  15  & 231227596 & 5650968147  \\
  16  & 468463678 & 14088437189  \\
  17  & 900399211 & 32842695865  \\
  18  & 1651885113 & 72096705250  \\
  19  & 2908101609 & 149972933224  \\
  20  & 4935241680 & 297260914919  \\
  21  & 8105691264 & 564176756133  \\
  22  & 12928165761 & 1029721046925  \\
  23  & 20083274851 & 1814279741924  \\
  24  & 30464974385 & 3096191012173  \\
  25  & 45228381098 & 5133079209599  \\
  26  & 65844403276 & 8288835750730  \\
  27  & 94161667324 & 13067204701747  \\
  28  & 132476193092 & 20153009591032  \\
  29  & 183609295480 & 30462135974619  \\
  30  & 250994166078 & 45201463018088  \\
 \ldots \\
 100  & 5078592962561811 & 7626564586350129874  \\
 200  & 880085483053191106 & 3142649707966986066096  \\
 300  & 16609587584876364182 & 94045317769328410172825  \\
 400  & 130737521692628355615 & 1014377064737641167135036  \\
 500  & 642112898798336927353 & 6329853496024443260170625  \\
 600  & 2346516577212608845729 & 28059449401711567076441545  \\
 700  & 7000760472426076825846 & 98420943238637719981239097  \\
 800  & 18015850571650533933600 & 291130542533101026907632456  \\
 900  & 41425805120978743606026 & 756477905666369353284138046  \\
1000  & 87178719353101913391613 & 1775181449515604936706800068 \\
\bottomrule
\end{tabular}}
\end{table}

\end{document}